\newtheorem{theorem}{Theorem}[section]
\newtheorem{thm}{Theorem}[section]
\newtheorem{lemma}[theorem]{Lemma}
\newtheorem{prop}[theorem]{Proposition}
\newtheorem{cor}[theorem]{Corollary}
\theoremstyle{definition}
\newtheorem{defn}[theorem]{Definition}
\newtheorem{examples}[theorem]{Examples}
\newtheorem{nota}[theorem]{Notation}
\theoremstyle{remark}
\newtheorem{remark}[theorem]{Remark}
\newtheorem{remarks}[theorem]{Remarks}
\numberwithin{equation}{section}
\def\C{{\mathbb{C}}}
\def\R{{\mathbb{R}}}
\def\Z{{\mathbb{Z}}}
\def\bs{{\backslash}}
\def\E{{\mathcal{E}}}
\def\nab{{\nabla}}
\def\E{{\mathcal{E}}}
\def\Lk{{L^{\otimes k}}}
\def\H{{\mathcal{H}}}
\def\Id{\operatorname{Id}}
\def\Spec{\operatorname{\textit{Spec}}}
\def\trace{\operatorname{\textit{trace}}}
\def\vol{\operatorname{\textit{vol}}}
\def\Sp{\operatorname{\textit{Sp}}}
\def\bs{{\backslash}}
\def\E{{\mathcal{E}}}
\def\L{{\mathcal{L}}}
\def\nab{{\nabla}}
\def\E{{\mathcal{E}}}
\def\Lk{{L^{\otimes k}}}
\def\H{{\mathcal{H}}}
\def\rtm{{\R^{2m}}}
\def\ztm{{\Z^{2m}}}
\def\n{{\mathfrak{n}}}
\def\z{{\mathfrak{z}}}
\def\tx{{\tilde{X}}}
\def\tip{{2\pi i}}
\def\cik{{C^\infty_k(P,\C)}}
\def\rb{{\mathbf{r}}}
\def\a{\mathbf a}
\def\b{\mathbf b}
\def\j{{\mathbf{j}}}
\def\Ad{\operatorname{Ad}}
\begin{document}\begin{title}[Classical Equivalence and
Quantum Equivalence for Magnetic Fields]
{Classical Equivalence and Quantum Equivalence of Magnetic Fields on Flat Tori}
\end{title}

\author{Carolyn Gordon}
\address{Department of Mathematics, Dartmouth College, Hanover, NH  03755}
\email{csgordon@dartmouth.edu}

\author{William Kirwin}
\address{CAMGSD, Departamento de Matem\'atica, Instituto Superior T\'ecnico, Av.
Rovisco Pais, 1049-001 Lisboa, Portugal \newline *Current Address:
Mathematisches Institut, University of Cologne, Weyertal 86 - 90, 50931
Cologne, Germany}
\email{will.kirwin@gmail.com}

\author{Dorothee Schueth}
\address{Institut f\"ur Mathematik, Humboldt-Universit\"at zu
Berlin, D-10099 Berlin, Germany}
\email{schueth@math.hu-berlin.de}

\author{David Webb}
\address{Department of Mathematics, Dartmouth College, Hanover, NH  03755}
\email{david.l.webb@dartmouth.edu}

\thanks{The first author and last author were supported in part by NSF Grants
DMS 0605247 and DMS 0906169.   The third author was partially supported by
DFG Sonderforschungsbereich 647. Moreover, the third author thanks Dartmouth
College and its Harris visiting program for great hospitality and support.}

\subjclass[2000]{Primary 58J53; Secondary 53C20}

\begin{abstract}
Let $M$ be a real $2m$-torus equipped with a translation-invariant metric $h$
and a translation-invariant symplectic form $\omega$; the latter we interpret as
a magnetic field on $M$. The Hamiltonian flow of half the norm-squared function
induced by $h$ on $T^*M$ (the ``kinetic energy'') with respect to the twisted
symplectic form $\omega_{T^*M}+\pi^*\omega$ describes the trajectories of a
particle moving on $M$ under the influence of the magnetic field $\omega$. If
$[\omega]$ is an integral cohomology class, then we can study the geometric
quantization of the symplectic manifold
$(T^*M,\omega_{T^*M}+\pi^*\omega)$ with the kinetic energy Hamiltonian.  We say
that the quantizations of two such
tori $(M_1,h_1,\omega_1)$ and $(M_2,h_2,\omega_2)$ are \emph{quantum equivalent}
if their quantum spectra, i.e., the
spectra of the associated quantum Hamiltonian operators, coincide; these quantum
Hamiltonian operators are proportional to the $h_j$-induced bundle Laplacians on
powers of the Hermitian line bundle on $M$ with Chern class $[\omega]$.

In this paper, we construct continuous families $\{(M,h_t)\}_t$ of mutually
nonisospectral flat tori $(M,h_t)$, each endowed with a translation-invariant
symplectic structure~$\omega$, such that the associated classical Hamiltonian
systems are pairwise equivalent. If $\omega$ represents an integer cohomology
class, then the $(M,h_t,\omega)$ also have the
same quantum spectra. We show moreover that
for any translation-invariant metric~$h$ and any translation-invariant symplectic structure~$\omega$ on~$M$ that represents
an integer cohomology class, the associated
quantum spectrum determines whether $(M,h,\omega)$ is K\"ahler,
and that all translation-invariant K\"ahler structures $(h,\omega)$
of given volume on~$M$ have the same quantum spectra.
Finally, we construct pairs of magnetic fields
$(M,h,\omega_1)$, $(M,h,\omega_2)$
having the same quantum spectra but nonsymplectomorphic classical
phase spaces. In some of these examples the pairs consist of K\"ahler
manifolds.
\end{abstract}

\maketitle

\section{Introduction}

\noindent
Consider an even-dimensional torus
$M=\ztm\bs\rtm$.  To each translation-invariant closed 2-form $\omega$ and
translation-invariant (i.e., flat) Riemannian metric $h$ on $M$, associate a
Hamiltonian system $(T^*M, \Omega, H)$.  Here $\Omega$ is the symplectic form on
$T^*M$ given by $\Omega=\omega_0+\pi^*\omega$, where $\omega_{0}$ is the
Liouville form, and $\pi:T^{\ast}M\rightarrow M$ is the projection.  The
Hamiltonian
function $H$ is given by $H(q,\xi)=\frac{1}{2}h_q(\xi,\xi)$.  In case
$\omega=0$, the Hamiltonian system gives the classical geodesic flow.  A
nontrivial closed
$2$-form $\omega$ may be viewed as a magnetic field on~$M$, and the Hamiltonian
system describes the dynamics of a charged particle moving in the magnetic
field.  We will say that $(M,h_1,\omega_1)$ and
$(M,h_2,\omega_2)$ are \emph{classically equivalent} if the associated
Hamiltonian systems are equivalent, i.e., if there is a symplectomorphism of
cotangent bundles intertwining the Hamiltonian functions.

If, moreover, $\omega$ represents an integer cohomology class, then there exists
a Hermitian complex line bundle $L$ with Chern class $[\omega]$. Choose a
Hermitian connection $\nabla$ with curvature $-2\pi i\omega$.  The connection
gives rise to a Hermitian connection, also denoted $\nabla$, on each tensor
power $\Lk$, i.e., on the line bundles
with Chern class $k\omega$, $k\in\Z^+$.

According to the procedure of geometric quantization (specifically, with respect
to the vertical polarization on the cotangent bundle in the presence of the
metaplectic correction), the quantum Hilbert space at level $\hbar=1/k$
($k\in\Z^+$) associated to $(T^*M,\omega_0+\pi^*\omega)$ is the $L^2$-space of
square integrable sections of $\Lk$. The quantum Hamiltonian associated to the
classical Hamiltonian $H$ is the operator
$\widehat{H}_{k}=\frac{\hbar^{2}}{2}\Delta$, where $\Delta=-\trace(\nab^2)$.
(See \cite{Woo}, and note that the scalar curvature
term appearing there is zero in our case.  Also see Section~2 of~\cite{GKSW} for
a brief outline of geometric quantization.)

For technical reasons, we will always assume that $\omega$ is nondegenerate,
i.e., that it is a symplectic structure on $M$. Of course, there are more
general magnetic fields on $M$, described by degenerate $2$-forms, but
nondegeneracy is crucial for certain isospectrality results (\textit{c.f.}
Remark \ref{rmk:nondeg}). We will see in Lemma~\ref{indep} that the spectra of
the operators $\widehat{H}_{k}$ are independent of the choice of the connection
$\nabla$ with curvature $-2\pi i\omega$.   Hence the spectra depend only on
$\omega$, $h$, and of course $k$, and will be denoted by $Spec(k\omega, h)$.
(This independence of the choice of connection is special to our setting of flat
tori with translation-invariant nondegenerate $\omega$.)   We will say that
$(M,h_1,\omega_1)$ and $(M,h_2,\omega_2)$ are \emph{quantum equivalent} if
$Spec(k\omega_1, h_1)=Spec(k\omega_2, h_2)$ for all $k\in\Z^+$.

Our main results are:

\begin{thm}\label{1}  Let $\omega$ be any translation-invariant symplectic
structure on $M:=\ztm\bs\rtm$.  Then every translation-invariant metric $h$ on
$M$ lies in a continuous family $\{h_t\}$ of mutually nonisometric flat metrics
such that $(M,h_t,\omega)$ is classically equivalent to $(M,h,\omega)$ for all
$t$.  Moreover, if $\omega$ represents an integer cohomology class, then these
$(M,h_t,\omega)$ are also quantum equivalent to  $(M,h,\omega)$ for all $t$.
\end{thm}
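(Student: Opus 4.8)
The plan is to reduce the problem to linear algebra on $\rtm$ and to produce an explicit family of ``shearing'' symplectomorphisms of the twisted cotangent bundle. Write $M=\ztm\bs\rtm$; identify $h$ with a positive-definite symmetric matrix $A$, so that $H(q,\xi)=\tfrac12\xi^\top A^{-1}\xi$, and identify $\omega$ with an invertible skew-symmetric matrix $B$ (which has integer entries exactly when $[\omega]$ is integral, since $\omega=\sum_{i<j}B_{ij}\,dx^i\wedge dx^j$ represents an integral class iff $B_{ij}\in\Z$). Pulled back to $T^*\rtm=\rtm\times\rtm$, the twisted form is $\Omega_B=\sum_i dq^i\wedge d\xi_i+\tfrac12\sum_{i,j}B_{ij}\,dq^i\wedge dq^j$, which is invariant under the deck action $(q,\xi)\mapsto(q+n,\xi)$, $n\in\ztm$; thus $(T^*M,\Omega_B,H)$ is the corresponding quotient.

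For the classical statement, given a matrix $C$ I set $S:=B^{-1}(\Id-C)$, equivalently $C=\Id-BS$, and consider the affine map $\widetilde\Psi_C(q,\xi)=(q+S\xi,\,C\xi)$ on $\rtm\times\rtm$. First I would check by a short computation that $\widetilde\Psi_C^{\,*}\Omega_B=\Omega_B$ if and only if $C^\top B^{-1}C=B^{-1}$, i.e.\ $C$ lies in the symplectic group $\mathrm{Sp}(2m;B^{-1})$ of the skew form $B^{-1}$; since $\widetilde\Psi_C$ has vanishing lower-left block and identity upper-left block, it commutes with the deck action and descends to a symplectomorphism of $(T^*M,\Omega_B)$. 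A second short computation gives $H\circ\widetilde\Psi_C=H_{h_C}$, where $h_C$ is the metric with matrix $A_C:=C^{-1}A\,C^{-\top}$; hence $\widetilde\Psi_C$ is an equivalence of Hamiltonian systems between $(M,h_C,\omega)$ and $(M,h,\omega)$. Since $\dim\mathrm{Sp}(2m;B^{-1})=m(2m+1)$ strictly exceeds the dimension $m(2m-1)$ of the orthogonal group of the quadratic form $A$, the orbit map $C\mapsto A_C$ is nonconstant through $C=\Id$; choosing a real-analytic path $C_t$ in $\mathrm{Sp}(2m;B^{-1})$ with $C_0=\Id$ along which $t\mapsto A_{C_t}$ is nonconstant produces a continuous family $h_t:=h_{C_t}$ of metrics, all classically equivalent to $h$, with $h_0=h$. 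That, after restricting the parameter interval, the $h_t$ are pairwise nonisometric follows from the standard fact that isometry classes of flat metrics on $M$ are the countable discrete orbits of the action $A'\mapsto V^\top A'V$ of $GL(2m,\Z)$, together with real-analyticity of $t\mapsto A_{C_t}$.

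Suppose now $B$ has integer entries. The key observation is that $C^\top B^{-1}C=B^{-1}$ forces $CBC^\top=B$; consequently $A_C^{-1}B=C^\top A^{-1}CB$ is conjugate to $A^{-1}(CBC^\top)=A^{-1}B$, so $(\omega,h_{C})$ and $(\omega,h)$ have the same \emph{symplectic eigenvalues} $\mu_1,\dots,\mu_m$, i.e.\ the positive reals with $\pm i\mu_1,\dots,\pm i\mu_m$ the eigenvalues of $A^{-1}B$ (this is in any case forced, since classical equivalence implies conjugate linearized flows). I would then invoke a spectral description of $Spec(k\omega,h)$: on a flat torus the bundle Laplacian $\Delta=-\trace(\nab^2)$ on $\Lk$ with integral curvature decomposes, via the Stone--von Neumann theorem and theta-function analysis of the sections of $\Lk$, into $m$ commuting two-dimensional Landau operators whose frequencies are determined by the $\mu_\ell$, so that $Spec(k\omega,h)$, as a set with multiplicities, depends only on $k$, on the $\mu_\ell$, and on the Pfaffian of $B$. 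Since $\omega$ is held fixed and the symplectic eigenvalues are unchanged, $Spec(k\omega,h_t)=Spec(k\omega,h)$ for all $t$ and all $k$. A more conceptual alternative, which I would also pursue, is that $\widetilde\Psi_C$ is an affine symplectomorphism and $H,H_{h_C}$ are quadratic, so the metaplectic representation furnishes a unitary $U_C$ with $U_C\,\widehat H_k^{\,h_C}\,U_C^{-1}=\widehat H_k^{\,h}$; here one must verify that $U_C$, initially acting on $L^2(\rtm)$, preserves the subspace of sections of $\Lk$ and descends to $L^2(M,\Lk)$, which holds because $\widetilde\Psi_C$ is compatible with the lattice.

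The linear-algebra content of the first part is elementary once the ansatz $\widetilde\Psi_C(q,\xi)=(q+S\xi,C\xi)$ is in hand, and the nonisometry is the usual discreteness/analyticity argument familiar from isospectrality constructions. The substantive difficulty is in the quantum part: establishing the spectral formula for $Spec(k\omega,h)$ with the correct multiplicities---or, in the alternative approach, checking that the metaplectic lift $U_C$ is genuinely well defined on the torus-level Hilbert space of sections and conjugates the reduced magnetic Laplacians. I expect this step, rather than the construction of the classical equivalences, to demand the most care.
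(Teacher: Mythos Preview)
Your proposal is correct and follows essentially the same strategy as the paper: both construct an explicit linear symplectomorphism of $(T^*M,\Omega)$ parameterized by the $\omega$-symplectic group, invoke the dimension count $\dim\Sp(2m,\R)=m(2m+1)>m(2m-1)=\dim O(2m)$ to obtain a nontrivial family of metrics, and then deduce quantum equivalence from a spectral formula showing that $\Spec(k\omega,h)$ depends only on the eigenvalues of $\boldsymbol{h}^{-1}\boldsymbol{\omega}$ and on $V_\omega$. The only cosmetic difference is that the paper's map $\Phi(q,p)=(Aq+C^{-1}({}^tA^{-1}-\Id)p,\,p)$ moves the base coordinate while fixing the momentum (and is then rephrased as a change of metric on the fixed torus), whereas your shear $(q,\xi)\mapsto(q+S\xi,\,C\xi)$ works directly on the fixed torus by transforming the fiber variable; your metaplectic alternative for the quantum step is not pursued in the paper.
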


For the remainder of the results, we assume that the forms $\omega_i$ ($i=1,2$)
represent integer cohomology classes.   In Theorem~\ref{main}, we give necessary
and sufficient conditions for quantum equivalence of pairs $(M,h_1,\omega_1)$
and $(M,h_2,\omega_2)$, and we observe that in our setting, for any choice of
$\omega$ as above, $Spec(\omega,h)$ determines $Spec(k\omega, h)$ for all
$k\in\Z^+$.

We will say that $(M,h,\omega)$ is \emph{K\"ahler}, or that $(h,\omega)$ is a
\emph{K\"ahler structure} on $M$, if there exists a complex structure $J$ such
that $(M, h,\omega,J)$ is K\"ahler.

We then prove the following, for $M=\ztm\bs\rtm$ with $m$ arbitrary:

\begin{thm} For any translation-invariant symplectic form~$\omega$
and translation-invariant metric~$h$ on~$M$, the spectrum
$Spec(\omega,h)$ determines whether $(M,h,\omega)$ is K\"ahler.
Moreover, all translation-invariant K\"ahler structures $(h,\omega)$ of given
volume on $M$ are quantum equivalent.  (Here both $\omega$ \textit{and} $h$ are
allowed to vary.)
\end{thm}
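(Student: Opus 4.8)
The plan is to read off everything from the explicit ``Landau level'' description of the spectrum furnished by Theorem~\ref{main}. Recall its shape. Associated with the pair $(h,\omega)$ is the \emph{field endomorphism} $J$ of $\R^{2m}$ defined by $\omega(X,Y)=h(JX,Y)$ for all $X,Y$; since $\omega$ is skew and $h$ symmetric, $J$ is skew-adjoint for $h$, hence $h$-normal and diagonalizable over $\C$ with eigenvalues $\pm ia_1,\dots,\pm ia_m$, all $a_j>0$ (positive because $\omega$ is nondegenerate, and in conjugate pairs because $J$ is real). A translation-invariant almost complex structure on a torus is automatically integrable, and one checks directly that $(M,h,\omega)$ is K\"ahler exactly when there is a translation-invariant $J'$ with $(J')^2=-\Id$ and $\omega=h(J'\cdot,\cdot)$; the latter identity forces $J'=J$, $h$-compatibility of $J$ is then automatic, and $J^2=-\Id$ holds iff every $a_j$ equals $1$. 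Thus, putting $p:=\operatorname{Pf}(\omega)=\int_M\omega^m/m!\in\Z^+$ (so that $\operatorname{Pf}(\omega)^2=\det\bigl(\omega(e_i,e_j)\bigr)=\det\bigl(h(e_i,e_j)\bigr)\,a_1^2\cdots a_m^2$ in a lattice basis $(e_i)$, whence $p=\vol(M,h)\,a_1\cdots a_m$), the spectrum $Spec(k\omega,h)$ consists of the numbers
\[
  \lambda_{(n_1,\dots,n_m)}\;=\;\frac{2\pi}{k}\sum_{j=1}^m a_j\Bigl(n_j+\tfrac{1}{2}\Bigr),
\]
one for each $m$-tuple $(n_1,\dots,n_m)$ of nonnegative integers, each occurring with multiplicity $k^m p$.

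For the first assertion I would argue that $Spec(\omega,h)$, i.e.\ the $k=1$ spectrum, already determines the unordered tuple $(a_1,\dots,a_m)$, which by the preceding remark decides K\"ahlerness. Since the $a_j$ are positive, $\min Spec(\omega,h)=\lambda_{(0,\dots,0)}=\pi\sum_j a_j$ is attained by the zero mode alone, so its multiplicity is exactly $p$; hence $p$ and $\sum_j a_j$ are spectral invariants. Subtracting $\lambda_{(0,\dots,0)}$ from every eigenvalue and dividing every multiplicity by $p$ turns $Spec(\omega,h)$ into the multiset of the numbers $2\pi\sum_j a_j n_j$ indexed by $m$-tuples of nonnegative integers, with generating function
\[
  \Theta(t)\;=\;\sum_{(n_1,\dots,n_m)}e^{-2\pi t\sum_j a_j n_j}\;=\;\prod_{j=1}^m\bigl(1-e^{-2\pi t a_j}\bigr)^{-1}\qquad(t>0).
\]
Then $\log\Theta(t)=\sum_{\ell\ge 1}\tfrac{1}{\ell}\,\varphi(\ell t)$ with $\varphi(t)=\sum_{j=1}^m e^{-2\pi t a_j}$, so M\"obius inversion gives $\varphi(t)=\sum_{\ell\ge 1}\tfrac{\mu(\ell)}{\ell}\log\Theta(\ell t)$; thus $\varphi$, and hence the multiset $\{a_1,\dots,a_m\}$ (read its elements off the asymptotics of $\varphi(t)$ as $t\to\infty$, stripping off the slowest-decaying exponential and iterating), is determined by $Spec(\omega,h)$. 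This exponential-sum/M\"obius injectivity is the only step that is not pure bookkeeping.

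For the second assertion, let $(M,h_1,\omega_1)$ and $(M,h_2,\omega_2)$ be translation-invariant K\"ahler structures with $\vol(M,h_1)=\vol(M,h_2)=:V$. By the first paragraph, in each case all $a_j=1$ and hence $p_i=\operatorname{Pf}(\omega_i)=V$, so for every $k\in\Z^+$ the eigenvalues of $\widehat H_k$ are the numbers $\tfrac{2\pi}{k}\bigl(N+\tfrac{m}{2}\bigr)$, $N=0,1,2,\dots$, the one indexed by $N$ occurring with multiplicity $k^m V\binom{N+m-1}{m-1}$ (there being $\binom{N+m-1}{m-1}$ tuples $(n_1,\dots,n_m)$ with $\sum_j n_j=N$, each contributing multiplicity $k^m V$). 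This data depends only on $m$, $k$ and $V$, so $Spec(k\omega_1,h_1)=Spec(k\omega_2,h_2)$ for all $k$; the two structures are quantum equivalent. (The same formula incidentally shows that such structures exist only when $V\in\Z^+$, in which case $V=\operatorname{Pf}(\omega)$.)

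I expect the real obstacle not to lie in this theorem at all but in the spectral description it invokes: the fact that $Spec(k\omega,h)$ has exactly the Landau-level form above, with constant mode multiplicity $k^m\operatorname{Pf}(\omega)$ independent of the shape of the lattice. If that were not already available from Theorem~\ref{main}, establishing it---modelling $L^2(M,\Lk)$ via the Stone--von Neumann theorem, putting the quadratic form $\Delta=-\trace(\nabla^2)$ into Williamson normal form relative to $\omega$ (which produces the frequencies $a_j$), and identifying the guiding-center degeneracy with $\dim H^0(M,\Lk)=k^m\operatorname{Pf}(\omega)$---would be the main work; granting it, both assertions reduce to the elementary manipulations above.
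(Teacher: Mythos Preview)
Your argument is correct, and its overall architecture matches the paper's: reduce ``K\"ahler'' to the condition that every frequency $a_j$ equals~$1$ (this is Proposition~\ref{ka}), observe that in the K\"ahler case the symplectic and Riemannian volumes agree so that ``same volume'' pins down the multiplicity factor, and conclude via the explicit Landau-level spectrum.

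The one genuine difference is how you recover the multiset $\{a_j\}$ from $\Spec(\omega,h)$. The paper does not argue this here at all; it simply invokes the implication (i)$\Rightarrow$(iii) of Theorem~\ref{main}, whose proof already extracts the $d_j^2$ (your $a_j$) by an elementary inductive peeling argument: the gap between the two smallest distinct eigenvalues is $2\pi\min_j a_j$, its multiplicity reveals how many $a_j$ attain the minimum, one strips those contributions and repeats. Your generating-function/M\"obius-inversion route is a valid alternative and has some aesthetic appeal, but it is more machinery than needed and in effect re-proves a piece of Theorem~\ref{main} that you are already citing. If you are willing to quote Theorem~\ref{main} in full, the first assertion is immediate from condition~(iii) plus Proposition~\ref{ka}; if you prefer to argue from the raw spectral formula, the paper's peeling argument is shorter than yours.

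Your closing paragraph is accurate: the substantive analysis is the spectral computation itself (the unnumbered Proposition preceding Theorem~\ref{main}, drawn from~\cite{GW}); once that is in hand, both assertions are bookkeeping.
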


\begin{thm}\label{2}  The collection $Spec(k\omega,h)$, $k\in \Z^+$, does not
determine the symplectic structure $\omega$ on $M$ nor the symplectic structure
$\Omega=\omega_0+\pi^*\omega$ on $T^*M$ (nor the restriction of $\Omega$ to the
cotangent bundle with the zero section removed).  In particular, quantum
equivalent systems need not have the same classical phase space.
\end{thm}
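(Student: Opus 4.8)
The plan is to exploit the explicit description of the spectrum $Spec(k\omega,h)$ that will come out of Theorem~\ref{main}, namely that it depends only on a finite set of symplectic/metric invariants — concretely, on the elementary divisors $d_1\mid d_2\mid\dots\mid d_m$ of the integral form $\omega$ on the lattice $\ztm$ (equivalently, the Chern class data of $L$) together with the flat metric $h$ up to the identifications allowed by Theorem~\ref{main}. Since those invariants include only the $d_j$ and not finer data about how $\omega$ sits relative to $h$, one expects room to move $\omega$ while fixing everything the spectrum sees. So the first step is to isolate, from Theorem~\ref{main}, the precise list of invariants on which $Spec(k\omega,h)$ depends, and to record that two translation-invariant symplectic forms $\omega_1,\omega_2$ with the same elementary divisors relative to the lattice, paired with a suitably adjusted common metric, give quantum equivalent systems.

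Next I would write down an explicit pair. Work in dimension $2m\ge 4$ (the statement is vacuous for $m$ too small in an interesting way, so take $m=2$ for the cleanest example). Fix $M=\Z^4\bs\R^4$ with standard coordinates, take $h$ to be the flat metric making the standard basis orthonormal (or whatever normalization Theorem~\ref{main} wants), and choose $\omega_1=dx_1\wedge dx_2+dx_3\wedge dx_4$, the standard symplectic form, which is unimodular on the lattice. For $\omega_2$ I would take a different unimodular integral skew form on $\Z^4$ that is \emph{not} equivalent to $\omega_1$ under the isometry group of $(M,h)$ — for instance a form whose associated complex structure (if one exists) is incompatible with $h$, or more robustly one chosen so that $(T^*M,\omega_0+\pi^*\omega_2)$ is not symplectomorphic to $(T^*M,\omega_0+\pi^*\omega_1)$. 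By Theorem~\ref{main}, since $\omega_1$ and $\omega_2$ have the same elementary divisors (all equal to $1$) and $h$ is the same, $Spec(k\omega_1,h)=Spec(k\omega_2,h)$ for all $k$, giving quantum equivalence.

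The remaining, and genuinely substantive, step is the negative half: to show these systems are \emph{not} classically equivalent, and indeed that even the weaker invariants in the statement differ. Here I would argue that a symplectomorphism $\Phi\colon(T^*M,\omega_0+\pi^*\omega_1)\to(T^*M,\omega_0+\pi^*\omega_2)$ would have to descend to, or at least detect, a diffeomorphism of $M$ carrying $[\omega_1]$ to $[\omega_2]$ in $H^2(M;\Z)$; since $\mathrm{Diff}(M)$ acts on $H^2(T^4;\Z)\cong\wedge^2\Z^4$ through $GL(4,\Z)$ and the orbit of an integral $2$-form is determined by its elementary divisors, I must instead choose $\omega_1,\omega_2$ with the \emph{same} elementary divisors but distinguish the phase spaces by a finer invariant — e.g.\ the volume $\int_M \omega^m/m!$ (i.e.\ the Pfaffian), or a linking/triple-product invariant of $(T^*M,\Omega)$, or the behavior of $\Omega$ restricted to $T^*M$ minus the zero section (where the zero section is intrinsically recoverable as the set of critical points of no canonical function, so one uses instead the asymptotic geometry at infinity of the fibers). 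Concretely: take $\omega_1$ with elementary divisors $(1,1)$ and $\omega_2=dx_1\wedge dx_2 + dx_3\wedge dx_4$ rescaled so that... — no: to keep the spectra equal the divisors must match, so the distinguishing invariant cannot be $GL(4,\Z)$-invariant, which forces me to use a non-linear symplectic invariant of the total space $T^*M$. The main obstacle, then, is exhibiting such an invariant that (a) is genuinely a symplectomorphism invariant of $(T^*M,\Omega)$, (b) is computable for translation-invariant $\omega$, and (c) actually separates the chosen pair; I anticipate this is where most of the work lies, and I would attack it either via the cohomology class $[\Omega]\in H^2(T^*M;\R)$ pulled back suitably (noting $T^*M\simeq M$ is homotopy equivalent to $M$, so $[\Omega]=[\pi^*\omega]$ and one is back to $[\omega]$ — hence this too fails and one must genuinely use the non-compactness) or via a Gromov-width / capacity computation on the region $\{H\le c\}$, whose symplectic capacity depends on $\omega$ in a way not captured by the elementary divisors of its integral class. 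That capacity comparison is, I expect, the crux.
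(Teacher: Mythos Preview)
Your proposal has a genuine gap stemming from a misreading of Theorem~\ref{main}. The quantum spectrum $\Spec(k\omega,h)$ is \emph{not} determined by the elementary divisors (Chern invariant factors) $r_1\mid\cdots\mid r_m$ of~$\omega$; by Theorem~\ref{main} it is determined by the eigenvalues of $h^\sharp\circ\omega^\flat$ (equivalently of $\boldsymbol{h}^{-1}\boldsymbol{\omega}$) together with the symplectic volume $V_\omega=r_1\cdots r_m$. In particular, fixing a common metric~$h$ and choosing two forms $\omega_1,\omega_2$ with the same invariant factors does \emph{not} in general give equal spectra, so your quantum-equivalence step is unjustified as stated.

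Worse, your choice of examples makes the negative half impossible. If $\omega_1$ and $\omega_2$ have the same Chern invariant factors then by Proposition~\ref{comp} there is $A\in SL(2m,\Z)$ with $A^*\omega_2=\omega_1$; the cotangent lift $\tilde A$ of~$A$ satisfies $\tilde A^*\omega_0=\omega_0$ and $\tilde A^*\pi^*\omega_2=\pi^*\omega_1$, so $(T^*M,\Omega_1)$ and $(T^*M,\Omega_2)$ are genuinely symplectomorphic. No capacity computation will separate them, and the crux you anticipate is in fact a dead end.

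The paper's route is the opposite of yours: pick $\omega,\omega'$ with \emph{different} invariant factors but the same product $V_\omega$, and choose~$h$ (or~$h,h'$) so that the eigenvalues of $\boldsymbol{h}^{-1}\boldsymbol{\omega}$ and $\boldsymbol{h'}^{-1}\boldsymbol{\omega'}$ agree; e.g.\ $\rb=(2,2)$ versus $\rb'=(1,4)$ as in Example~\ref{ex.ka}. Quantum equivalence then follows directly from Theorem~\ref{main}. For the phase-space distinction, the cohomological approach you dismissed actually works once sharpened: rather than looking only at $[\Omega]$, one evaluates $[\Omega^k]$ on integral $2k$-cycles in $T^*M\simeq M$ and observes that the minimal nonzero absolute value of such pairings is $r_1\cdots r_k$ (Propositions~\ref{noneq} and~\ref{noneq0}). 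This recovers each $r_k$ and is a symplectomorphism invariant, so $\rb\neq\rb'$ implies the phase spaces (with or without the zero section) are nonsymplectomorphic. No capacities are needed.
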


We pause to clarify the notion of classical phase space used here and to
motivate the parenthetical remark in Theorem~\ref{2}.   By considering the
entire cotangent bundle, instead of the cotangent bundle minus its zero section,
we are using a somewhat stronger notion of equivalence than is sometimes
considered in the mathematical literature. Indeed, our notion of classical
equivalence (Definition \ref{def:classical-equiv}) implies that if
$(M_1,h_1,\omega_1)$ and $(M_2,h_2,\omega_2)$ are classically equivalent, then
$(M_1,\omega_1)$ and $(M_2,\omega_2)$ are symplectomorphic.  The removal of the
zero section is mathematically rather than physically motivated.
Often analytical considerations necessitate replacing the Hamiltonian flow by a
reparametrization that is not well behaved on the zero section.  This is the
case, for example, in the analysis of the singularities of the wave trace
\cite{DG} and in the study of regularizations of the Kepler flow \cite{HdL},
\cite{LS}.  Removing the zero section also results in stronger --- and more
difficult --- geodesic rigidity results, as in the article \cite{CK} cited
below.
On the other hand, in classical mechanics, the phase space is the space of all
possible states of the system. For a particle moving on a manifold under
the influence of a magnetic field, an initial condition consisting of a given
position and zero momentum (i.e., an element of the zero section of $T^*M$) is
perfectly acceptable.
While the results above were stated using the phase space $(T^*M, \Omega)$, they
remain true if one removes the zero section from $T^*M$. In particular, the
resulting stronger version of Theorem~\ref{2} (the parenthetical comment) is
proven in Proposition~\ref{noneq0}.

Theorem~\ref{1} contrasts sharply with the case $\omega=0$.   C. Croke and B.
Kleiner \cite{CK} showed that the geodesic flow on a torus is $C^0$ rigid, i.e.,
that any Riemannian manifold whose geodesic flow is $C^0$ conjugate to that of a
flat torus $(M,h)$ is isometric to the torus $(M,h)$.  Note that $C^0$-conjugacy
is a much weaker condition than classical equivalence.

This is the second of two articles addressing questions of quantum equivalence.
In the first \cite{GKSW}, we constructed examples of pairs (or finite families)
of Hermitian locally symmetric spaces $M_i$ for which the line bundles with
Chern class defined by the K\"ahler structure and their tensor powers over the
various $M_i$
are isospectral for all $i$.

This article was motivated by results of \cite{GGKW}.   In fact, our results on
quantum equivalence of  magnetic fields are a reinterpretation and expansion of
Corollaries 3.8 and 3.9 of \cite{GGKW}.

\section{Classical equivalence of magnetic flows}

\begin{defn}\label{def:classical-equiv} Given a Riemannian manifold $(M,h)$ and
a closed 2-form $\omega$ on
$M$ (which we will always assume to be nondegenerate), let $\Omega$ be the
symplectic structure on $T^*M$ given by
$\Omega:=\omega_{0}+\pi^{\ast}\omega$, where $\omega_{0}$ is the Liouville form
(i.e., $\omega_0=-d\lambda$, where $\lambda$ is the canonical $1$-form on
$T^*M$) and $\pi:T^{\ast}M\rightarrow M$ is the projection.  Define
$H:T^*M\to\R$ by $H(q,\xi)=\frac{1}{2}h_q(\xi,\xi).$   We will refer to
$(T^{\ast}M, \Omega, H)$ as the classical Hamiltonian system associated with
$(M,h,\omega)$.   Given Riemannian manifolds $(M_i,h_i)$, $i=1,2$, and closed
2-forms $\omega_i$ on $M_i$, we will say that $(M_1,h_1,\omega_1)$ and $(M_2,
h_2,\omega_2)$ are
\emph{classically equivalent} if the associated Hamiltonian systems
$(T^{\ast}M_i, \Omega_i, H_i)$ are equivalent, i.e., if there exists a
symplectomorphism $\Phi:(T^{\ast}M_1, \Omega_1)\to (T^{\ast}M_2, \Omega_2)$ such
that $H_1=H_2\circ \Phi$.
\end{defn}

\begin{theorem}\label{class} Let $\omega$ be a translation-invariant symplectic
structure on $\rtm$, let $A$ be a linear symplectomorphism of $(\rtm,\omega)$,
let $h$ be a translation-invariant metric on $\rtm$, and let $\L$ be a lattice
in $\rtm$.  We will continue to denote by $\omega$ and $h$ the induced
structures on quotients of $\R^{2m}$ by a lattice.    Then
$(\L\bs\rtm,h,\omega)$ is classically equivalent to $(A(\L)\bs\rtm,h,\omega)$.
\end{theorem}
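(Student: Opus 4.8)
The plan is to build the required symplectomorphism on the universal cover and then pass to the quotient. Identify $T^*\rtm$ with $\rtm\times(\rtm)^*$, with coordinates $(q,\xi)$; then $\Omega=\omega_0+\pi^*\omega$ and $H(q,\xi)=\tfrac12 h^{-1}(\xi,\xi)$ are translation-invariant, and $T^*(\L\bs\rtm)$, resp.\ $T^*(A(\L)\bs\rtm)$, is the quotient of $(T^*\rtm,\Omega,H)$ by the deck action $\tau_v\colon(q,\xi)\mapsto(q+v,\xi)$ with $v$ ranging over $\L$, resp.\ over $A(\L)$. It therefore suffices to produce a diffeomorphism $\widetilde\Phi$ of $T^*\rtm$ with $\widetilde\Phi^*\Omega=\Omega$, $H\circ\widetilde\Phi=H$, and $\widetilde\Phi\circ\tau_v=\tau_{Av}\circ\widetilde\Phi$ for all $v\in\rtm$: such a $\widetilde\Phi$ intertwines the two deck actions via the isomorphism $v\mapsto Av$ of $\L$ onto $A(\L)$, hence descends to a symplectomorphism $(T^*(\L\bs\rtm),\Omega)\to(T^*(A(\L)\bs\rtm),\Omega)$ carrying the kinetic energy to the kinetic energy, which is precisely a classical equivalence.

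The map I would use is most transparent after changing coordinates on $T^*\rtm$ from $(q,\xi)$ to $(c,\xi)$, where $c:=q-\sharp_\omega\xi$ is the ``guiding center'' and $\sharp_\omega\colon(\rtm)^*\to\rtm$ is the inverse of $v\mapsto\iota_v\omega$ (available because $\omega$ is nondegenerate). In these coordinates the deck action is again $(c,\xi)\mapsto(c+v,\xi)$ and $H$ still depends only on $\xi$; moreover --- and this is the point --- the mixed $dc\wedge d\xi$ terms of $\Omega$ cancel, the cross-terms produced by substituting $q=c+\sharp_\omega\xi$ into $\pi^*\omega$ being killed by those coming from $\omega_0$, so that $\Omega$ splits as $\omega^{(c)}+\eta^{(\xi)}$: the sum of the pullback $\omega^{(c)}$ of $\omega$ under $(c,\xi)\mapsto c$ and a fixed symplectic form $\eta^{(\xi)}$ in the $\xi$-variables alone. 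I then set $\widetilde\Phi$ to be $(c,\xi)\mapsto(Ac,\xi)$. This visibly fixes $H$ and conjugates $\tau_v$ to $\tau_{Av}$, and it preserves $\Omega=\omega^{(c)}+\eta^{(\xi)}$ precisely because $A$ preserves $\omega^{(c)}$ --- here is where the hypothesis $A\in\Sp(\omega)$, rather than merely $A\in GL(\rtm)$, is used --- while fixing $\eta^{(\xi)}$ trivially. Written back in the original coordinates, $\widetilde\Phi(q,\xi)=\bigl(Aq+(I-A)\,\sharp_\omega\xi,\ \xi\bigr)$ (up to sign conventions), and one may alternatively just write this formula down and check $\widetilde\Phi^*\Omega=\Omega$ directly, using that $A$ symplectic means $A^TWA=W$ for $W$ the matrix of $\omega$.

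The step I expect to require the most care is the cross-term cancellation in the second paragraph, i.e.\ the assertion that in guiding-center coordinates $\Omega$ has no $c$--$\xi$ mixed terms (equivalently, the direct verification of $\widetilde\Phi^*\Omega=\Omega$): this is a short but convention-sensitive computation in which the magnetic twist $\pi^*\omega$, the nondegeneracy of $\omega$, and the fact that $A$ preserves $\omega$ must all interact correctly. Once that is in hand, everything else --- the equivariance, the invariance of $H$ (which, note, holds for every translation-invariant metric $h$), and the descent of $\widetilde\Phi$ to the tori --- is formal, and the proof is complete.
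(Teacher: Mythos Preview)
Your argument is correct, and in fact you construct the very same map the paper does: using the paper's convention that $C$ is the matrix of~$\omega$, one has $\sharp_\omega=-C^{-1}$, and since $A$ symplectic gives ${}^tA^{-1}=CAC^{-1}$, your formula $\widetilde\Phi(q,\xi)=\bigl(Aq+(I-A)\sharp_\omega\xi,\xi\bigr)=\bigl(Aq+(A-I)C^{-1}\xi,\xi\bigr)$ coincides with the paper's $\Phi(q,p)=\bigl(Aq+C^{-1}({}^tA^{-1}-\Id)p,\,p\bigr)$.

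The difference is purely one of presentation. The paper simply writes down the linear map~$\Phi$ and verifies $\Phi^*\Omega=\Omega$ by a direct matrix computation using ${}^tC=-C$ and ${}^tACA=C$. You instead introduce the guiding-center coordinates $(c,\xi)=(q-\sharp_\omega\xi,\xi)$, observe that in these coordinates $\Omega$ splits as $\omega^{(c)}\oplus\eta^{(\xi)}$ with no cross terms, and then the map $(c,\xi)\mapsto(Ac,\xi)$ visibly preserves everything. Your route is a bit longer but explains \emph{why} such a~$\Phi$ exists and makes the role of the hypothesis $A\in\Sp(\omega)$ transparent; the paper's route is shorter and avoids the coordinate change at the cost of pulling the formula out of thin air. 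Your cautionary remark about the cross-term cancellation being ``convention-sensitive'' is apt---that is exactly where the sign of $\sharp_\omega$ must be chosen correctly---but the computation goes through as you describe.
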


\begin{remarks}\label{r}
$\text{ }$

\noindent (i) The conclusion may be rephrased as the statement that $(\L\bs\rtm,
A^*h, \omega)$ is classically equivalent to $(\L\bs\rtm, h,\omega)$.

\noindent (ii) In Theorem~\ref{class}, we do not require that $\L$ have maximal
rank in $\rtm$, i.e., that $\L\bs\rtm$ be a torus.   However, in the case that
it is a torus and that $\omega$ represents an integer cohomology class in
$\L\bs\rtm$, the reformulation in (i) will give us different quantum
Hamiltonians (Laplacians associated with different metrics) on the same complex
line bundle.   We will see in Corollary~\ref{contin} that the systems are
quantum equivalent.

\end{remarks}

\begin{proof} Let $n=2m$.  Under the standard identification of $T^*\R^n$ with
$\R^{2n}$, the symplectic form $\Omega=\omega_0+\pi^*\omega$ is a translation-invariant 2-form and thus may be identified with the bilinear form on $\R^{2n}$
with matrix
$$\begin{bmatrix}C&\Id\\-\Id&0\end{bmatrix}
$$
with respect to the standard basis,
where each block is of size $n\times n$ and where $C$ is the matrix of the
anti-symmetric nondegenerate bilinear form on $\R^n$ defined by $\omega$.  The
linear map $\Phi:\R^{2n}\to\R^{2n}$ given by
$$\Phi(q,p)=(Aq+C^{-1}({}^tA^{-1}-\Id)p, p)
$$
preserves $\Omega$, as can be seen by an easy computation
using ${}^t C=-C$ and ${}^t\!A C A =C$.   The Hamiltonian $H$ depends only on
$p$ (since $h$ is translation invariant) and thus is also preserved by $\Phi$.
Thus $\Phi$ is a self-equivalence of the Hamiltonian system $(T^*\R^n,h,
\omega$).   Finally,
we have
$\Phi(q_0+q,p)=(Aq_0,0)+\Phi(q,p)$ for all $q_0\in\R^n$ and, in particular, for
all $q_0\in\L$.  Thus $\Phi$ induces an equivalence between
$(\L\bs\rtm,h,\omega)$ and $(A(\L)\bs\rtm,h,\omega)$.
\end{proof}

\begin{cor}\label{corclass}  Let $\omega$ be a translation-invariant symplectic
structure on a torus $M=\ztm\backslash\rtm$.  Then every translation-invariant
Riemannian metric $h$ on $M$ belongs to a continuous family $\{h_t\}_t$ of
mutually nonisometric translation-invariant Riemannian metrics such that
$(M,h_t,\omega)$ is classically equivalent to $(M,h,\omega)$ for all $t$.  The
parameter space of this deformation has dimension at least $2m$.
\end{cor}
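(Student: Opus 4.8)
The plan is to deduce this from Theorem~\ref{class}. By Remark~\ref{r}(i), for every linear symplectomorphism $A$ of $(\rtm,\omega)$ the system $(M,A^*h,\omega)$ is classically equivalent to $(M,h,\omega)$, where $M=\ztm\bs\rtm$ and $A^*h$ is the metric with matrix ${}^t\!A\,h\,A$. Writing $\Sp$ for the group of linear symplectomorphisms of $(\rtm,\omega)$, it therefore suffices to exhibit inside the orbit $\mathcal O:=\{A^*h\mid A\in\Sp\}$ a continuous $2m$-parameter family of pairwise nonisometric translation-invariant metrics on $M$ that contains $h$. I will use two standard facts about flat tori on the fixed lattice $\ztm$: regarding translation-invariant metrics on $M$ as positive definite symmetric $2m\times2m$ matrices, two such metrics $g,g'$ give isometric tori if and only if $g'={}^t\phi\,g\,\phi$ for some $\phi\in\operatorname{GL}(2m,\Z)$; and $\operatorname{GL}(2m,\Z)$ acts properly discontinuously on the space $\mathcal M$ of all such metrics.

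The first step is a dimension count for $\mathcal O$. The orbit map $\Psi\colon\Sp\to\mathcal M$, $\Psi(A)={}^t\!A\,h\,A$, has differential $X\mapsto{}^t\!X\,h+h\,X$ at the identity, whose kernel is the Lie algebra $\mathfrak{sp}\cap\mathfrak{so}(h)$ of the isotropy group $\Sp\cap O(h)$. This isotropy group is a compact subgroup of $\Sp$, hence has dimension at most $\dim\mathfrak u(m)=m^2$, since $U(m)$ is a maximal compact subgroup of $\Sp\cong\Sp(2m,\R)$. Thus $d\Psi$ at the identity has rank at least $\dim\Sp-m^2=(2m^2+m)-m^2=m^2+m\ge 2m$, with equality forced only when $m=1$. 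Picking a $2m$-dimensional subspace of $\mathfrak{sp}$ on which $d\Psi$ is injective and exponentiating, I get a $2m$-dimensional submanifold $S\ni\Id$ of $\Sp$ with $\Psi|_S$ an immersion near $\Id$; then $N:=\Psi(S)$ is, near $h$, an embedded $2m$-dimensional submanifold of $\mathcal O$ through $h$, all of whose metrics are classically equivalent to $(M,h,\omega)$.

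The remaining, essential step is to cut $N$ down to a genuine $2m$-parameter family of \emph{pairwise nonisometric} metrics that still contains $h$. By proper discontinuity of $\operatorname{GL}(2m,\Z)$ on $\mathcal M$, there is a neighborhood of $h$ in $\mathcal M$ on which two translation-invariant metrics are isometric tori if and only if they are related by an element of the finite isotropy group $F:=\{\phi\in\operatorname{GL}(2m,\Z)\mid{}^t\phi\,h\,\phi=h\}$. Linearizing the action of its effective quotient $\overline F:=F/(F\cap\{\pm\Id\})$ near the fixed point $h$, the task reduces to finding inside the $2m$-dimensional space $T_hN\subseteq T_h\mathcal M$ a region $D$ containing $0$ whose distinct points lie in distinct $\overline F$-orbits. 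The elements of $\overline F$ that preserve $T_hN$ are dealt with by requiring $D$ to lie in a half-open fundamental domain for their linear action on $T_hN$; the remaining elements identify $T_hN$ with itself only along the proper subspaces $T_hN\cap\overline\phi(T_hN)$, a nowhere dense union that may be deleted from $D$. Transporting $D$ to $N$ through the exponential chart at $h$ then produces a continuous $2m$-parameter family $\{h_t\}_{t\in D}$ of pairwise nonisometric flat metrics, each classically equivalent to $(M,h,\omega)$, with $h=h_{t_0}$; only when $\overline F$ acts nontrivially near $h$ --- i.e.\ when $h$ carries extra lattice symmetry --- does $h$ land at a corner of $D$ rather than in its interior, which does not affect the assertion.

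The hard part is exactly this last step, and the reason it is not routine is that the orbit $\mathcal O$ is $\Sp$-invariant but \emph{not} $\operatorname{GL}(2m,\Z)$-invariant, so one cannot simply divide $\mathcal O$ by the group implementing the isometry identifications and invoke that the quotient has the same dimension. For instance, when $m\ge2$ and $(h,\omega)$ is K\"ahler with $F$ containing an integral complex structure $\phi$, such a $\phi$ lies in $\Sp$, hence preserves $\mathcal O$, yet acts nontrivially on $T_hN$, so $N$ itself need not inject into the moduli space of flat tori. The local reduction to the finite group $\overline F$ afforded by proper discontinuity, together with the fundamental-domain argument above, is what gets around this.
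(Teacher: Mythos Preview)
Your approach is correct and matches the paper's: set $h_t=A_t^*h$ for a family $A_t$ of linear symplectomorphisms, invoke Remark~\ref{r}(i) for classical equivalence, and count dimensions. The paper's own proof is a one-line dimension count, observing only that $\dim\Sp(2m,\R)-\dim O(2m)=m(2m+1)-m(2m-1)=2m$, and does not spell out the ``mutually nonisometric'' verification at all; your sharper stabilizer bound via the maximal compact $U(m)\subset\Sp(2m,\R)$ (yielding orbit dimension $\ge m^2+m$) and your proper-discontinuity/fundamental-domain argument for $\operatorname{GL}(2m,\Z)$ make explicit what the paper leaves to the reader.
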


\begin{proof} $h_t$ is defined as
$A_t^*h$, where $A_t$ (with $A_0=\Id$) is a curve in the
group of linear isomorphisms of $\rtm$ that preserve $\omega$.  This group is
isomorphic to $\Sp(2m,\R)$
and has dimension $m(2m+1)$, while the group of linear isomorphisms that
preserve $h$ is isomorphic to $O(2m)$ and has dimension $m(2m-1)$.  The
corollary thus follows from Remark~\ref{r}(i).
\end{proof}

\section{Hermitian line bundles over tori}

\subsection{Hermitian connections with the same translation-invariant curvature}
\label{same}
Let $M$ be a compact $C^\infty$ manifold, $L$ a Hermitian line bundle over $M$,
and let $[\omega]\in H^2(M;\Z)$ be the Chern class of $L$.  The curvature of any
Hermitian connection on $L$ lies in $-\tip[\omega]$. (Our notation differs from
that of \cite{GKSW} by a factor of $2\pi$.) If $\nabla$ and $\nabla'$ are two
Hermitian connections on $L$, then $\nabla'=\nabla+\tip \beta$ for some
real-valued $1$-form $\beta$ on $M$. The two connections have the same curvature
if and only if $d\beta=0$, in which case $\beta=\alpha + df$ for some harmonic
$1$-form $\alpha$ and some $f\in C^\infty(M)$. The term $df$ changes the
connection only by a gauge equivalence: in fact, letting $\E(L)$ denote the
space of smooth sections of $L$, then the map $\E(L)\to \E(L)$ given by
$s\mapsto e^{\tip f}s$ intertwines $\nabla+\tip df$ and $\nabla$.  Given any
Riemannian metric $h$ on $M$, this map also intertwines the Laplacians
$-\trace(\nabla +\tip df)^2$ and $-\trace(\nabla^2)$. Thus the two Laplacians
are isospectral.  The same statement holds for the associated Laplacians on all
the higher tensor powers of $L$. Thus we may assume that $f=0$.

In general, the addition of a harmonic 1-form $\tip \alpha$ to $\nabla$ will
affect the spectrum. However, we will see that in the case of line bundles with
nondegenerate Chern class over flat tori, endowed with a connection whose
curvature form on the torus is translation invariant,
the addition of a harmonic term does \emph{not} affect the spectrum; see
Lemma~\ref{indep} below.  Thus in this case, the spectrum of the
Laplacian depends only on the metric on the torus and the curvature of the
connection on the bundle.

\subsection{Principal circle bundles over tori.} Let $M=\Z^{2m}\bs \R^{2m}$,
where $m$ is a positive integer.  Let $\omega$ be a translation-invariant
symplectic structure on $M$ that represents an integer cohomology class.   We
will first construct a principal circle bundle~$P$ with Chern class $[\omega]$.
The bundle~$P$ will be a quotient by a discrete subgroup of a two-step nilpotent
Lie group~$N$, isomorphic to the Heisenberg group of dimension $2m+1$.

Since $\omega$ is translation invariant, it may be viewed as a nondegenerate
antisymmetric bilinear map $\omega:\R^{2m}\times\R^{2m}\to\R$ that takes integer
values on $\Z^{2m}\times\Z^{2m}$. We endow $N:=\R^{2m+1}$ with the structure of
a 2-step nilpotent Lie group with multiplication
$$
(u_1,t_1)(u_2,t_2)=(u_1+u_2, t_1+t_2+\tfrac12\omega(u_1,u_2))
$$
for all $u_1, u_2\in\rtm$ and $t_1, t_2\in\R$. Then $N$ is isomorphic to the
$(2m+1)$-dimensional Heisenberg group.  The coordinate vector field
$Z:=\frac{\partial}{\partial t}$ is left invariant and spans the center
$\z=\{0\}\times\R$ of the Lie algebra~$\n$ of~$N$. The center coincides
with the derived algebra, so the Lie bracket may be viewed as a bilinear map
$[\,\,,\,]:\rtm\times\rtm\to\z$, which is given by
$$[X,Y]=\omega(X,Y)Z.$$

Let $\Gamma\subset N$ be the subgroup generated by
$(e_1,0),\ldots,(e_{2m},0),(0,1)\in\R^{2m+1}=N$, where $e_j$ denotes the $j$th
standard basis vector of~$\rtm$.  Then the projection of~$\Gamma$ to~$\rtm$ is
$\ztm$. The intersection of~$\Gamma$ with the center $\{0\}\times\R$ of~$N$ is
precisely $\{0\}\times\Z$, the subgroup generated by the element $(0,1)$. In
fact, for $X,Y\in\{\pm e_1,\ldots,\pm e_{2m}\}$ the commutator
$(X,0)(Y,0)(X,0)^{-1}(Y,0)^{-1}$ equals
$(X+Y,\frac12\omega(X,Y))(-X-Y,\frac12\omega(-X,-Y))=(0,\omega(X,Y))$ which lies
in $\{0\}\times\Z$ since $\omega$ is integer valued on $\Z^{2m}\times\Z^{2m}$;
moreover, any product $(X_1,0)\cdot\ldots\cdot(X_k,0)$ with $X_1,\ldots,
X_k\in\{\pm e_1,\ldots,\pm e_{2m}\}$ and $X_1+\ldots+ X_k=0$ can be written as a
product of commutators as above.

In particular, $\Gamma$ is a uniform discrete subgroup of~$N$.  Set $P=\Gamma\bs
N$.  The center of~$N$ projects to a circle, and the action of the center by
translations on~$N$ induces a circle action on~$P$, giving $P$ the structure of
a principal circle bundle over~$M$.

We identify the circle $S^1$, given by the quotient of the center of~$N$ by its
intersection with~$\Gamma$, with the unitary group $U(1)$.  Its Lie algebra is
thus identified with the space of purely imaginary complex numbers. Under this
identification, the vector $Z\in\z$ above corresponds to $\tip\in i\R=T_1U(1)$;
hence, a connection on~$P$ is specified by an $S^1$-invariant 1-form $2\pi i\mu$
on $P$ such that
$2\pi i\mu(Z)\equiv 2\pi i$; that is, $\mu(Z)\equiv 1$.  (Here $\mu$ is
real-valued.) The kernel $\H$ of $\mu$ is called the horizontal distribution
associated with the connection.  By abuse of
terminology, we will say that $\mu$ is \emph{left invariant} if it pulls back to
a left-invariant $1$-form on $N$. In this case, $\H$ is spanned by
left-invariant vector fields (again in the sense that a left-invariant vector
field on $N$ induces a well-defined vector field on $P=\Gamma\bs N$, which we
refer to as left invariant) and thus may be viewed as a subspace of $\n$
complementary to $\z$.   Conversely, since every left-invariant 1-form is also
$S^1$ invariant, any complement of $\z$ in $\n$ is the horizontal distribution
associated with some translation-invariant connection on $P$.

Suppose that $\tip\mu$ is a left-invariant connection on $P$.  For $X,Y\in\H$,
we have
$$\tip d\mu(X,Y)=-\tip\mu([X,Y])=-\tip\mu(\omega(X,Y)Z)=-\tip\omega(X,Y)$$
since $\mu(Z)=1$.  Thus every translation-invariant connection on $P$ has
curvature form $-\tip\omega$.

Let $\alpha:\rtm\to\R$ be a linear functional.   Because of the nondegeneracy of
$\omega$, the map $\n\to\n$ that sends $X\in\rtm$ to $\alpha(X)Z$ and sends $Z$
to zero is an inner derivation of $\n$, and the map $N\to N$ given by
$(u,t)\mapsto (u,t+\alpha(u))$ is an inner automorphism of $N$.

It follows that if $\mu'$ is another left-invariant 1-form such that
$\mu'(Z)=1$, then $\mu'=\mu\circ \Ad(a)$ for some $a\in N$, and the
corresponding horizontal distribution satisfies $\H'=\Ad(a^{-1})\H$.

\subsection{Associated Hermitian line bundles}\label{assochlb}
Let $\omega$ and $P$ be as above and let $\tip\mu$ be a left-invariant
connection on $P$. The group $S^1=U(1)$ acts on $\C$ in the standard way, hence
diagonally on the product $P\times\C$, giving rise to a Hermitian line bundle
$$L=(P\times\C)/\sim$$
where $\sim$ is the equivalence relation given by $(p,w)\sim(pz^{-1},zw)$ for
$p\in P$, $w\in \C$, and $z\in S^1=U(1)$. The bundle $L$ has Chern class
$[\omega]$.

The higher tensor powers of $L$ are given by
$$\Lk=(P\times\C)/\sim_k$$
where $\sim_k$ is given by $(p,w)\sim_k(pz^{-1},z^kw)$
for $p\in P$, $w\in \C$, and $z\in S^1=U(1)$.

The space $C^\infty(M,\Lk)$ of smooth sections of $\Lk$ may be identified with
the subspace $C^\infty_k(P,\C)$ given by
\begin{equation}\label{eq.sect}
\cik=\{f\in C^\infty(P,\C)\mid f(pz^{-1})=z^kf(p)\text{ for all } p\in P, z\in
S^1=U(1) \}.
\end{equation}
Equivalently,
\begin{equation}\label{eq.sect2}
\cik=\{f\in C^\infty(P,\C)\mid Zf=-\tip kf\}.
\end{equation}

Because of the trivialization $TP\cong P\times\n$, any complex $1$-form on~$P$
may be viewed as a map from~$\n$ to the space of smooth complex functions
on~$P$.  For $f\in\cik$, the map corresponding to the $1$-form
$df+\tip kf\mu$ actually maps~$\n$ to $\cik$ and vanishes on $\z$; hence, it
induces a well-defined map from $\rtm$ to $\cik$. Recalling
Equation~(\ref{eq.sect}) and identifying $\rtm$ with the tangent space at each
point of $M$, we thus get a map $\nabla f: TM\to C^\infty(M,\Lk)$.  This defines
the Hermitian connection $\nabla$ on $\Lk$ associated with the connection
$\tip\mu$ on the principal bundle.  (Here we are
using the same notation $\nabla$ for the connection on each of the
bundles~$\Lk$.  The connection $\nabla$ on $\Lk$ is of course the usual
connection on the $k$th tensor power of the bundle $L$ arising from the
connection $\nabla$ on $L$.)  For $X\in TM$ and $\tx$ any horizontal vector in
$TP$ with $\pi_*\tx=X$, where $\pi:P\to M$ is the bundle projection, we have
$$\nabla_X\,f=\tx f.$$  The curvature of $\nabla$ is $-\tip k\omega$.

Given a flat metric $h$ on $M$ (i.e., an inner product on $\rtm$), let
$\{X_1,\dots, X_{2m}\}$ be an orthonormal basis of the Lie algebra $\rtm$ of
$M$, and let $\tx_1,\dots, \tx_{2m}$ be the horizontal lifts to vector fields on
the principal bundle $P$.  Then under the identification of $C^\infty(M,\Lk)$
with $\cik$ as in Equation~(\ref{eq.sect}), the Laplacian on $C^\infty(M,\Lk)$
defined by the connection $\nabla$ is given by
$$\Delta(f)=-\sum_{j=1}^{2m}\,\tx_j^2(f).$$

Let $\rho$ denote the representation of the nilpotent Lie group $N$ on $L^2(P)$
given by $(\rho(a)f)(p)=f(pa)$ and let $\rho_*$ be the representation of the Lie
algebra $\n$ given by the differential of $\rho$.  Then by Fourier decomposition
with respect to the action of the center of $N$, we have
$$L^2(P)=\oplus_{k\in\Z}\,L^2_k(P)$$
where
$$L^2_k(P)=\{f\in L^2(P)\mid \rho(z^{-1})f=z^k f\text{ for all }z\in
S^1=U(1)\}.$$
I.e., $L^2_k(P)$ is the closure of $\cik$ in $L^2(P)$.  Given a translation-invariant connection on $L$ (and thus on $\Lk$ for all $k\in\Z^+$) and a flat
metric on $M$, the associated Laplacian, viewed as an operator on $\cik$,
extends to $L^2_k(P)$ as the densely defined operator
\begin{equation}\label{eq.lap}\Delta=-\sum_{j=1}^{2m}\,\rho_*(\tx_j)^2.
\end{equation}

\begin{lemma}\label{indep}
We continue to assume that $\omega$ is a translation-invariant symplectic
structure on the torus $M$ and that the cohomology class of~$\omega$ is
integral.  Let $L$ be a Hermitian line bundle with Chern class $[\omega]$, and
let $\nabla$ and $\nabla'$ be two connections on $L$ with curvature
$-\tip\omega$. Then given any flat metric on~$M$, the Laplacians, and thus the
quantum Hamiltonians, on $\Lk$ defined by $\nabla$ and $\nabla'$ are isospectral
for all $k\in\Z^+$.
\end{lemma}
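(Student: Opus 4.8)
The plan is to reduce, via the discussion in Subsection~\ref{same}, to the case where $\nabla$ and $\nabla'$ are both translation-invariant, and then to exhibit the unitary intertwining their Laplacians as an explicit right translation on the Heisenberg principal bundle $P=\G\bs N$. For the reduction, fix a translation-invariant connection $\nabla_0$ on $L$ with curvature $-\tip\omega$ (one exists by Subsection~\ref{same}, coming from a left-invariant $\tip\mu$). By Subsection~\ref{same}, each of $\nabla,\nabla'$ equals $\nabla_0+\tip(\alpha+df)$ for some $h$-harmonic $1$-form $\alpha$ and some $f\in C^\infty(M)$, and on $\Lk$ the gauge transformation $s\mapsto e^{\tip kf}s$ of $\E(\Lk)$ unitarily conjugates the Laplacian of $\nabla_0+\tip(\alpha+df)$ to that of $\nabla_0+\tip\alpha$. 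Since on a flat torus the $h$-harmonic $1$-forms are precisely the translation-invariant ones, i.e.\ the linear functionals on $\rtm$, the connection $\nabla_0+\tip\alpha$ is again translation-invariant. Hence it suffices to treat the case where $\nabla$ and $\nabla'$ are translation-invariant, so that $\nabla'=\nabla+\tip\alpha$ for a linear functional $\alpha\colon\rtm\to\R$.

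Next I would pass to $P$. In the notation of Subsection~\ref{assochlb}, $\nabla$ and $\nabla'$ arise from the left-invariant connection forms $\tip\mu$ and $\tip(\mu+\alpha)$ on $P$, where $\alpha$ is extended to $\n$ by $\alpha(Z)=0$ (so $(\mu+\alpha)(Z)=1$). Given an $h$-orthonormal basis $X_1,\dots,X_{2m}$ of $\rtm$ with $\mu$-horizontal lifts $\tx_1,\dots,\tx_{2m}\in\H\subset\n$, the $(\mu+\alpha)$-horizontal lift of $X_j$ is $Y_j=\tx_j-\alpha(X_j)Z$. By the nondegeneracy of $\omega$ there is a unique $v\in\rtm$ with $\omega(v,\cdot)=\alpha$; as observed in Subsection~\ref{assochlb}, conjugation by $a:=(v,0)\in N$ is then the inner automorphism $(u,t)\mapsto(u,t+\alpha(u))$ of $N$, so that $\Ad(a)X=X+\alpha(X)Z$ for $X\in\rtm$, $\Ad(a)Z=Z$, and therefore $Y_j=\Ad(a^{-1})\tx_j$.

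To conclude: right translation $R_a\colon\G n\mapsto\G na$ is a diffeomorphism of $P$; it is $S^1$-equivariant (the $S^1$-action on $P$ is itself by right translations by central elements, and right translations commute), so $f\mapsto f\circ R_a$ preserves $\cik$, and it preserves the invariant measure, hence is unitary on each $L^2_k(P)$. In the notation of Subsection~\ref{assochlb} this operator is $\rho(a)$, and since $\rho$ is a representation we get $\rho(a)^{-1}\rho_*(\tx_j)\rho(a)=\rho_*(\Ad(a^{-1})\tx_j)=\rho_*(Y_j)$ for each $j$. Hence, by~(\ref{eq.lap}),
$$\Delta'=-\sum_{j=1}^{2m}\rho_*(Y_j)^2=\rho(a)^{-1}\Bigl(-\sum_{j=1}^{2m}\rho_*(\tx_j)^2\Bigr)\rho(a)=\rho(a)^{-1}\,\Delta\,\rho(a),$$
so $\Delta$ and $\Delta'$ (and hence the quantum Hamiltonians proportional to them) are unitarily equivalent on $L^2_k(P)$ for every $k\in\Z^+$, and therefore isospectral.

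The substance lies in the last two paragraphs, and that is where I expect the only real obstacle. The naive attempt to gauge $\tip k\alpha$ away by the function $p\mapsto e^{\tip k\alpha(\,\cdot\,)}$ fails on $M$ because the linear functional $\alpha$ need not be $\ztm$-integral --- indeed for $\omega=0$ the spectrum genuinely changes. The resolution is that on the Heisenberg bundle $P$ this change of connection is implemented by an honest right translation $R_a$, and the existence of the needed $a$ is exactly what the nondegeneracy of $\omega$ provides, through the solvability of $\omega(v,\cdot)=\alpha$; this is also why, as remarked before the lemma, the statement is special to the nondegenerate setting.
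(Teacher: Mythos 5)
Your proposal is correct and follows essentially the same route as the paper: reduce via the gauge/harmonic splitting of Subsection~\ref{same} to $\nabla'=\nabla+\tip\alpha$ with $\alpha$ a linear functional, realize the change of connection on the Heisenberg bundle $P$ as precomposition with an inner automorphism $\Ad(a)$ (using nondegeneracy of $\omega$ to solve $\omega(v,\cdot)=\alpha$), and conclude $\Delta'=\rho(a^{-1})\Delta\rho(a)$ via the right-regular representation. The only detail the paper makes explicit that you pass over quickly is the initial reduction to the explicitly constructed bundle $L$ (any Hermitian line bundle with Chern class $[\omega]$ is isomorphic to it by a curvature- and spectrum-preserving bundle isomorphism), but this is a one-line normalization, not a gap.
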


\begin{proof}
Note that $L$ is determined by its Chern class up to a bundle isomorphism
inducing the identity map on~$M$. Such isomorphisms preserve
the curvature forms of the connections which they intertwine, and the spectra of
the corresponding bundle Laplacians coincide.  Therefore, we may assume that $L$
is the Hermitian line bundle which we explicitly constructed above.

Let $\nabla$ be the connection associated with the principal connection
$\tip\mu$ as above. By the discussion in Subsection~\ref{same}, we
may assume that $\nabla'=\nabla +\tip\alpha$ for some harmonic 1-form $\alpha$
on~$M$.  Viewing $\alpha$ as a linear functional on $\rtm$, the map $\n\to\n$
given by $X+cZ\mapsto \alpha(X)Z$ (for all $X\in \rtm$ and $c\in\R$) is an inner
derivation and $\nabla'$ is the connection on~$L$ associated with a principal
connection $\tip\mu\circ \Ad(a)$ for some $a\in N$.  The horizontal distribution
$\H'$ is given by $\Ad(a^{-1})\H$.   It follows that the Laplacian associated
with $\nabla'$ on $\cik$ is given by
$$
\Delta'=\sum_{j=1}^{2m}\,
\rho_*(\Ad(a^{-1})\tx_j)^2=\sum_{j=1}^{2m}\rho(a^{-1})\rho_*(\tx_j)^2\rho(a)=\rho
(a^{-1})\circ \Delta\circ\rho(a).
$$
\end{proof}

\begin{remark}
\label{rmk:nondeg}
The hypothesis of nondegeneracy of $\omega$ is essential here.
At the other extreme in which $\omega=0$ so that $L$ is the trivial bundle, the
spectra of the various Laplacians $-(d-\tip\alpha)^2$ associated with the
(harmonic) connections of curvature zero form the Bloch spectrum of the torus.
\end{remark}

\begin{nota}\label{notspec}
In the notation of Lemma~\ref{indep}, we will write
$$\Spec(k\omega,h)
$$
for the spectrum of the operator $\widehat{H}_{k}=\frac{\hbar^{2}}{2}\Delta$,
where $\hbar=\frac{1}{k}$ and $\Delta$ is the Laplacian on~$\Lk$ defined by the
flat metric~$h$ on $\ztm\bs\rtm$ and any connection~$\nabla$ on~$L$ with
curvature $-\tip\omega$.  By the lemma, this spectrum is well defined.
\end{nota}

\section{Quantum equivalent line bundles}

\begin{nota}
Denote the standard coordinates on $\rtm$ by
$(x,y)=(x_1,\dots,x_m,y_1,\dots,y_m)$.  Given an $m$-tuple $\rb=(r_1,\dots,r_m)$
of positive integers such that
\begin{equation}
\label{div}r_1\mid r_2\mid\ldots\mid r_m,
\end{equation}
define a translation-invariant symplectic form $\omega_\rb$ on $\rtm$ by
$$\omega_\rb=\sum_{j=1}^m \,r_j\,dx_j\wedge dy_j.$$
\end{nota}

\begin{prop}\label{comp}\cite[p. 304]{GrHa}
Let $\omega$ be a translation-invariant symplectic structure on $\rtm$ such that
$[\omega]\in H^2(M;\Z)$.  Then there exists a unique $m$-tuple $\rb$ satisfying
Equation~\ref{div} such that $A^*\omega=\omega_\rb$ for some $A\in SL(2m,\Z)$.
We refer to the entries of this $m$-tuple as the \emph{Chern invariant factors}.
\end{prop}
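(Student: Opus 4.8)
The plan is to reduce the statement to a purely arithmetic fact about integral alternating forms --- the elementary-divisor (``Frobenius'') normal form --- which is what is being cited from \cite{GrHa}. Since $M=\ztm\bs\rtm$, the integral class $[\omega]$ is represented by~$\omega$ itself, which we regard as a nondegenerate alternating bilinear form $\omega\colon\ztm\times\ztm\to\Z$, extended $\R$-bilinearly to $\rtm$. If $\Omega$ denotes the integral, skew-symmetric, invertible matrix of~$\omega$ in the standard basis, then the matrix of the pullback $A^{*}\omega$ is ${}^tA\,\Omega\,A$, so the assertion ``$A^{*}\omega=\omega_{\rb}$ for some $A\in SL(2m,\Z)$'' says exactly that $\ztm$ admits a $\Z$-basis $u_{1},\dots,u_{m},v_{1},\dots,v_{m}$ with $\omega(u_{i},v_{j})=r_{i}\delta_{ij}$, $\omega(u_{i},u_{j})=\omega(v_{i},v_{j})=0$, and $r_{1}\mid\dots\mid r_{m}$ (taking $A$ to have these vectors as columns). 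I would prove existence of such a basis by induction on~$2m$, and uniqueness of~$\rb$ by an invariant-factor argument.

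For existence, let $r_{1}>0$ generate the subgroup of~$\Z$ generated by all values $\omega(a,b)$, $a,b\in\ztm$; a B\'ezout-type argument, of the same flavor as the reduction to Smith normal form, produces $u_{1},v_{1}\in\ztm$ with $\omega(u_{1},v_{1})=r_{1}$. Because every value of~$\omega$ lies in $r_{1}\Z$, the integers $\omega(u_{1},w)/r_{1}$ and $\omega(v_{1},w)/r_{1}$ are defined for all $w\in\ztm$, and $w\mapsto w-(\omega(u_{1},w)/r_{1})\,v_{1}+(\omega(v_{1},w)/r_{1})\,u_{1}$ retracts $\ztm$ onto $W:=\{w\in\ztm\mid\omega(u_{1},w)=\omega(v_{1},w)=0\}$, yielding an $\omega$-orthogonal splitting $\ztm=\Z u_{1}\oplus\Z v_{1}\oplus W$ \emph{over~$\Z$}. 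Since $\omega$ is nondegenerate and restricts nondegenerately to $\Z u_{1}\oplus\Z v_{1}$ (determinant $r_{1}^{2}$), it restricts nondegenerately to~$W$, and all of its values on~$W$ still lie in $r_{1}\Z$. Iterating the construction on the rank-$(2m-2)$ pair $(W,\omega|_{W})$ produces the desired basis, with $r_{i}$ equal to the positive generator of the value group of~$\omega$ on the lattice remaining after the first $i-1$ pairs have been peeled off; since each such lattice contains the next, $r_{i}\in r_{i-1}\Z$, i.e.\ $r_{1}\mid\dots\mid r_{m}$. This yields $A\in GL(2m,\Z)$ with $A^{*}\omega=\omega_{\rb}$; its determinant is $\pm1$, and I would normalize it to~$1$ to land in $SL(2m,\Z)$.

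For uniqueness, if also $(A')^{*}\omega=\omega_{\rb'}$, then $\omega_{\rb}=C^{*}\omega_{\rb'}$ with $C=(A')^{-1}A\in GL(2m,\Z)$, so the matrices of $\omega_{\rb}$ and $\omega_{\rb'}$ are $\Z$-congruent and hence share their Smith normal form. After permuting coordinates the matrix of $\omega_{\rb}$ is block diagonal with $2\times 2$ blocks having~$0$ on the diagonal and $\pm r_{i}$ off it, so its invariant factors are $r_{1},r_{1},r_{2},r_{2},\dots,r_{m},r_{m}$ --- already nondecreasing because $r_{1}\mid\dots\mid r_{m}$; comparing with those of $\omega_{\rb'}$ forces $\rb=\rb'$. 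I expect the genuine work --- the main obstacle --- to be the two integrality points in the existence step that have no analogue over the rationals: that the global greatest common divisor~$r_{1}$ of all values of~$\omega$ is actually attained by a \emph{single} pair $(u_{1},v_{1})$, and that the orthogonal decomposition $\ztm=\Z u_{1}\oplus\Z v_{1}\oplus W$ is valid over~$\Z$ rather than only after tensoring with~$\R$ --- it is precisely here that the minimality of~$r_{1}$ is indispensable. A minor further nuisance is checking that $A$ can be taken in $SL(2m,\Z)$ rather than only in $GL(2m,\Z)$.
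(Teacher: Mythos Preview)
The paper does not give its own proof of this proposition; it simply cites \cite[p.~304]{GrHa}, where the Frobenius normal form for nondegenerate integral alternating forms is established. Your argument is exactly that standard proof---choose a pair realizing the gcd of all values, split off its $\omega$-orthogonal complement over~$\Z$ using the minimality of~$r_1$, and iterate---and it is correct in substance.

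One point deserves a sharper treatment than ``normalize $\det A$ to~$1$.'' You cannot change the sign of $\det A$ by post-composing with a matrix that preserves~$\omega_{\rb}$, since any such matrix lies in the (integral) symplectic group of~$\omega_{\rb}$ and therefore already has determinant~$1$. The correct observation is the Pfaffian identity $\operatorname{Pf}({}^tA\,\Omega\,A)=(\det A)\operatorname{Pf}(\Omega)$, which forces $\det A=\operatorname{Pf}(\Omega_{\rb})/\operatorname{Pf}(\Omega)=r_1\cdots r_m/\operatorname{Pf}(\Omega)$. Thus $\det A=1$ is automatic precisely when $\operatorname{Pf}(\Omega)>0$, i.e.\ when $\int_M\omega^m>0$ for the standard orientation of~$M$; this appears to be an implicit standing convention in the paper (cf.\ Notation~\ref{not}(ii), where $V_\omega=\sqrt{\det\boldsymbol\omega}$ is taken to be positive). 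Without that convention one must allow $A\in GL(2m,\Z)$, a harmless imprecision that does not affect any subsequent use of the proposition.
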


Thus by a linear change of coordinates preserving $\ztm$, we may assume when
convenient that $\omega=\omega_\rb$ for some $\rb$ satisfying~\ref{div}.

\begin{remark}  Line bundles are of course classified by their Chern classes,
not by the Chern invariant factors.  The $m$-tuple $\rb$ is a complete
homeomorphism invariant of $\omega$, in the sense that given two integral
symplectic structures with the same Chern invariant factors, there is a
self-homeomorphism of the base space pulling back one integral symplectic
structure to the other; however, integral symplectic structures with the same
Chern invariant factors need not be cohomologous and thus may give rise to
inequivalent line bundles.
\end{remark}

\begin{nota}\label{not}
${\quad}$\newline

\noindent (i) Given a translation-invariant symplectic structure $\omega$ and a
translation-invariant Riemannian metric $h$ on $\rtm$, viewed as bilinear forms,
define a linear transformation $F:\rtm\to\rtm$ by the condition
$$\omega(u,v)=h(F(u),v)$$
for all $u,v\in\rtm$.  Let $\boldsymbol{h}$ and $\boldsymbol\omega$ denote the
Gram matrices of the
bilinear forms $h$ and $\omega$ with respect to the standard basis of $\rtm$.
The matrix of the
linear transformation $F$ in this basis is given by
$$\boldsymbol{F}=\boldsymbol{h}^{-1}\boldsymbol{\omega}.$$
Note that $F$ is antisymmetric relative to the inner product $h$,
and its eigenvalues are purely imaginary; we denote them by $\pm d_1^2 i,\dots,
\pm d_m^2 i$.

The linear transformation $F$ may be expressed in terms of the
``musical isomorphisms'': Given a finite-dimensional real vector space $V$ and a
nondegenerate bilinear form $B:V\times V\to\R$, denote by $B^{\flat}:V\to V^*$
the isomorphism from $V$ to its dual space given by $B^{\flat}(u)=B(\cdot,u)$,
i.e., $(B^{\flat}(u))(v)=B(v,u)$ for $u,v\in V$, and by $B^{\sharp}:V^*\to V$
the inverse of~$B^\flat$.    Then $F=h^\sharp\circ \omega^\flat$.

\smallskip
\noindent (ii) Let $M=\ztm\bs\rtm$.   Set
$$V_\omega=\sqrt{\det(\boldsymbol\omega)}
  \,\,=\int_M\,\frac{1}{m!}\omega^m \,,$$ the
\emph{symplectic volume} of $M$.
Since the standard basis of~$\rtm$ is a basis of~$\ztm$
we have, in particular, $V_{\omega_\rb}=r_1r_2\dots r_m$.

\end{nota}

\begin{prop}
We use Notation~\ref{notspec} and~\ref{not}. Let $M=\Z^{2m}\bs\rtm$, let
$\omega$ be a translation-invariant symplectic structure on $M$ representing an
integer cohomology class, and let $h$ be any flat metric on $M$. Given an
$m$-tuple $\j=(j_1,\dots, j_m)$ of nonnegative integers, let
$$
\nu(\j)=\pi\sum_{i=1}^m\,d_i^2(2j_i+1).
$$
Then $\Spec(k\omega,h)$ is the collection of all $\frac{1}{k}\nu(\j)$,
$\j\in\Z^m$, each counted $k^mV_\omega$ times.
\end{prop}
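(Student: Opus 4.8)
By Lemma~\ref{indep} the spectrum is independent of the choice of Hermitian connection on $L$ with curvature $-\tip\omega$, so I would work with the explicitly constructed bundle and with the left-invariant principal connection $\tip\mu$ for which $\mu$ vanishes on the subspace $\rtm\subset\n$; then the horizontal lift $\tx_j$ of $X_j\in\rtm$ is simply $X_j$ regarded as a left-invariant vector field on $N$, and by \eqref{eq.lap} the Laplacian on $L^2_k(P)$ (the closure of $\cik$) is $\Delta=-\sum_{j}\rho_*(\tx_j)^2$. The first step is a good choice of orthonormal basis: since $F=h^\sharp\circ\omega^\flat$ is antisymmetric and nondegenerate with respect to $h$, the normal form for antisymmetric operators gives an $h$-orthonormal basis $X_1,\dots,X_m,Y_1,\dots,Y_m$ of $\rtm$ with $FX_i=d_i^2Y_i$ and $FY_i=-d_i^2X_i$; equivalently $\omega(X_i,Y_i)=d_i^2$ while all other $\omega$-pairings among the basis vectors vanish. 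Hence in $\n$ we have $[X_i,Y_i]=d_i^2Z$, all other brackets of basis vectors vanish, and $\Delta=-\sum_{i=1}^m\bigl(\rho_*(X_i)^2+\rho_*(Y_i)^2\bigr)$.

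Next I would decompose the $N$-representation $\rho$ on $L^2_k(P)$. The center acts by $\rho_*(Z)=-\tip k$ by \eqref{eq.sect2}, so by the Stone--von Neumann theorem $(\rho,L^2_k(P))$ is isomorphic to a direct sum of $m_k$ copies of the unique irreducible unitary representation $\H_k$ of $N$ with that central character, realized on $L^2(\R^m)$ in the Schr\"odinger model. (One can alternatively be completely explicit: a function in $\cik$ lifted to $N$ has the form $e^{-\tip kt}g(u)$ with $g$ a quasi-periodic function on $\rtm$ satisfying the relations defining sections of $\Lk$, and $\Delta$ acts on $g$ as the direct sum over $i$ of planar magnetic Schr\"odinger operators with field strength $2\pi kd_i^2$ in the coordinates dual to the adapted basis.) On each copy of $\H_k$ the skew-adjoint operators $a_i:=\rho_*(X_i)$ and $b_i:=\rho_*(Y_i)$ satisfy $[a_i,b_i]=d_i^2\rho_*(Z)=-\tip kd_i^2$ with all other brackets zero, so $\Delta|_{\H_k}=-\sum_i(a_i^2+b_i^2)$ is a sum of $m$ commuting one-dimensional harmonic oscillators. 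A standard rescaling to the canonical relation $[\hat x_i,\hat p_i]=i$ then shows that $-(a_i^2+b_i^2)$ has spectrum $\{2\pi kd_i^2(2j_i+1):j_i\in\Z_{\ge0}\}$, each eigenvalue simple; hence $\Delta|_{\H_k}$ has spectrum $\{2\pi k\sum_id_i^2(2j_i+1)\}=\{2k\,\nu(\j)\}$, with each value of multiplicity one in $\H_k$. Consequently, on $L^2_k(P)$ the spectrum of $\Delta$ is $\{2k\,\nu(\j)\}$, each value occurring exactly $m_k$ times.

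It remains to identify the multiplicity $m_k$. Since $m_k$ is a topological quantity independent of $h$, by Proposition~\ref{comp} I may take $\omega=\omega_\rb$, so that $V_\omega=\sqrt{\det\boldsymbol\omega}=r_1\cdots r_m$ and $\Gamma$ is generated by the exponentials of the standard basis of $\rtm$ together with $Z$, with $[\partial_{x_i},\partial_{y_i}]=r_iZ$. The multiplicity of $\H_k$ in $L^2(\Gamma\bs N)$ is then the classical Heisenberg-nilmanifold count $k^mr_1\cdots r_m=k^mV_\omega$ (obtained by Poisson summation / Kirillov theory, or equivalently as $\dim H^0(M,\Lk)$ via Riemann--Roch once a complex structure compatible with $\omega$ is fixed). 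Finally $\widehat{H}_k=\tfrac{\hbar^2}{2}\Delta=\tfrac1{2k^2}\Delta$, so $\Spec(k\omega,h)$ consists of the numbers $\tfrac1{2k^2}\cdot 2k\,\nu(\j)=\tfrac1k\nu(\j)$, each counted $k^mV_\omega$ times, as claimed. The step I expect to need the most care is precisely this multiplicity count: the harmonic-oscillator argument shows at once that each Landau level is \emph{simple on every irreducible summand}, so the real content is the exact degeneracy $m_k=k^mV_\omega$ of the $N$-isotypic components of $L^2_k(P)$ --- equivalently, exhibiting $k^mV_\omega$ independent lowest-level (theta-like) sections and then sweeping out all levels with the ladder operators built from $a_i,b_i$.
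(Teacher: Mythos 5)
Your proof is correct, but it takes a genuinely different (more self-contained) route than the paper's. The paper does not carry out the spectral computation at all: after normalizing $\omega=\omega_\rb$ via Proposition~\ref{comp}, it observes that the bundle Laplacian $\Delta$ on $L^2_k(P)$ differs from the Riemannian Laplacian $\Delta_P$ of the Heisenberg manifold $P=\Gamma\bs N$ only by the central term $(\rho_*Z)^2$, which acts on $L^2_k(P)$ as the constant $4\pi^2k^2$, and then simply quotes the computation of $\Spec(\Delta_P|_{L^2_k(P)})$ from Gordon--Wilson \cite{GW}. You instead reprove the content of that citation: the Stone--von Neumann decomposition of $L^2_k(P)$ into $m_k$ copies of the irreducible representation with central character $-\tip k$, the normal form of the $h$-antisymmetric map $F$ yielding an adapted orthonormal basis with $[X_i,Y_i]=d_i^2Z$, and the resulting commuting harmonic oscillators. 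Your normalizations check out: $[a_i,b_i]=-\tip k d_i^2$ gives oscillator spectrum $2\pi k d_i^2(2j_i+1)$, so $\Delta|_{L^2_k(P)}$ has spectrum $2k\nu(\j)$ with multiplicity $m_k$, and $\widehat H_k=\frac{1}{2k^2}\Delta$ gives $\frac1k\nu(\j)$ as claimed. The one external input you still need is the multiplicity $m_k=k^mV_\omega$ of the irreducible in $L^2(\Gamma\bs N)$, which you correctly identify as the crux; this is exactly the part of \cite{GW} that the paper's citation covers, and your proposed justifications (Kirillov/Howe--Richardson multiplicity formula, or identifying the lowest Landau level with $H^0(M,\Lk)$ and invoking Riemann--Roch for abelian varieties --- the latter needing the additional Bochner--Kodaira-type observation that the lowest level consists precisely of holomorphic sections for a compatible complex structure) are standard and valid. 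One small point worth making explicit in your reduction to $\omega=\omega_\rb$: the $SL(2m,\Z)$ change of coordinates $A$ of Proposition~\ref{comp} induces an isomorphism of the associated Heisenberg groups carrying one lattice $\Gamma$ onto the other, so the nilmanifolds and hence the multiplicities $m_k$ genuinely coincide. Your route buys transparency (it explains why the answer has Landau-level form and where each factor comes from); the paper's buys brevity at the cost of being opaque without \cite{GW} in hand.
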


\begin{proof} Recalling Notation~\ref{notspec}, we see that
$2k^2\Spec(k\omega,h)$ is the spectrum of the operator in
Equation~(\ref{eq.lap}) acting on $L^2_k(P)$. Rather than carry out the
computation here, we refer to \cite{GW}, Section~3, where a similar computation
is performed. We indicate here how to translate the computation in \cite{GW} to
our setting.  We assume that $\omega=\omega_\rb$ for some $\rb=(r_1,\dots, r_m)$
as above.   Perform a change of coordinates on $\rtm$, letting $x_i'=r_ix_i$ and
$y_i'=y_i$ for $i=1,\dots, m$. In these new coordinates,
$\omega=\sum_{i=1}^m\,dx'_i\wedge dy'_i$, and the lattice~$\ztm$ is the
collection of all elements with coordinates in $r_1\Z\times\dots\times
r_m\Z\times\Z^m$. This change of coordinates aligns our notation with that in
\cite{GW}.   Next, in \cite{GW}, the operator under study is the Laplacian
$\Delta_P$ associated with the Riemannian metric on the Heisenberg manifold
$P=\Gamma\bs N$ induced by the left-invariant metric on $N$ for which the basis
$\{\tx_1,\dots,\tx_{2m},Z\}$ is orthonormal, where $\tx_1,\dots,\tx_{2m}$ are as
in Subsection~\ref{assochlb}.  We have $\Delta_P=\Delta +  (\rho_*Z)^2$ for
$\Delta$ as in~Equation~(\ref{eq.lap}).  (In the notation of \cite{GW}, we are
setting $g_{2m+1}$ equal to~$1$.) Writing $L^2(P)=\oplus_{k\in\Z}\,L^2_k(P)$,
then it is shown in \cite{GW} that for $k\neq 0$, the spectrum of $\Delta_P$
restricted to $L^2_k(P)$ is given by the collection of numbers
$4\pi^2k^2+2|k|\nu(\j)$, each occurring with multiplicity
$|k|^mr_1\dots r_m$.   (Our $|k|$ is denoted by $c$ in \cite{GW}.)  The operator
$(\rho_*Z)^2$ acts on $L^2_k(P)$ as multiplication by $4\pi^2k^2$.  Correcting
for this term and taking $k\in\Z^+$, we obtain the proposition.
\end{proof}

\begin{theorem}\label{main}
We use Notation~\ref{notspec} and~\ref{not}.  Let $\omega$ and $\omega'$ be two
translation-invariant symplectic structures on
$M$ representing integer cohomology classes, and let $h$ and $h'$ be flat
metrics on $M$.   Then the following are equivalent:
\begin{enumerate}
\item[(i)] $\Spec(\omega,h)=\Spec(\omega',h').$
\item[(ii)] $\Spec(k\omega, h)=\Spec(k\omega', h')$ for all $k\in\Z^+$.
\item[(iii)] The linear transformations $h^\sharp\circ \omega^\flat$ and
$h'^\sharp\circ \omega'^\flat$ (equivalently the matrices
$\boldsymbol{h}^{-1}\boldsymbol{\omega}$ and
$\boldsymbol{h'}^{-1}\boldsymbol{\omega'}$)
have the same eigenvalue spectrum, and $V_\omega=V_{\omega'}$.
\end{enumerate}
\end{theorem}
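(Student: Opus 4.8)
The plan is to prove the cycle of implications (ii)$\Rightarrow$(i) trivially, (i)$\Leftrightarrow$(iii), and (iii)$\Rightarrow$(ii), relying entirely on the explicit description of $\Spec(k\omega,h)$ from the previous Proposition. Recall that Proposition states $\Spec(k\omega,h)$ consists of the numbers $\frac1k\nu(\mathbf j)=\frac{\pi}{k}\sum_{i=1}^m d_i^2(2j_i+1)$, $\mathbf j\in\Z_{\ge0}^m$, each with multiplicity $k^mV_\omega$, where $\pm d_i^2 i$ are the eigenvalues of $F=h^\sharp\circ\omega^\flat$. So the entire spectrum at every level $k$ is encoded by the unordered tuple $(d_1^2,\dots,d_m^2)$ (equivalently the eigenvalue spectrum of $F$) together with the multiplicity constant $V_\omega$. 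The implication (ii)$\Rightarrow$(i) is immediate by taking $k=1$.

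For (i)$\Rightarrow$(iii): from $\Spec(\omega,h)=\Spec(\omega',h')$ I would first recover $V_\omega$ and $V_{\omega'}$. The natural way is to look at the smallest element of the spectrum, $\nu(\mathbf 0)=\pi\sum d_i^2$, and its multiplicity. A cleaner route is to use a counting/heat-trace type argument: the number of eigenvalues (with multiplicity) below a bound $R$ grows like $V_\omega$ times a polynomial in $R$ depending only on $(d_1^2,\dots,d_m^2)$; comparing leading asymptotics forces both $V_\omega=V_{\omega'}$ and $\{d_i^2\}=\{d_i'^2\}$. Alternatively, and more elementarily, I would argue: the set of \emph{distinct} values appearing in $\Spec(\omega,h)$ is exactly $\{\pi\sum d_i^2(2j_i+1):\mathbf j\in\Z_{\ge0}^m\}$, which is an affine-shifted numerical semigroup generated (over $\Z_{\ge0}$) by $2\pi d_1^2,\dots,2\pi d_m^2$ based at $\pi\sum d_i^2$; one can read off the multiset $\{d_i^2\}$ from the "gap structure" of this set (e.g., the successive differences near the bottom, handling coincidences among the $d_i^2$ with care), and then $V_\omega$ is the common multiplicity. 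Either way the conclusion is that the eigenvalue spectra of $F$ and $F'$ agree and $V_\omega=V_{\omega'}$. Note $V_\omega=\sqrt{\det\boldsymbol\omega}$ and the eigenvalues of $F$ are $\pm d_i^2 i$, which gives the equivalence with the matrix statement in (iii).

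For (iii)$\Rightarrow$(ii): this is immediate, since (iii) says precisely that the multisets $\{d_i^2\}$ and $\{d_i'^2\}$ coincide and $V_\omega=V_{\omega'}$, and by the Proposition these two data completely determine $\Spec(k\omega,h)$ for every $k\in\Z^+$. Hence $\Spec(k\omega,h)=\Spec(k\omega',h')$ for all $k$.

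I expect the only real obstacle to be the bookkeeping in (i)$\Rightarrow$(iii): extracting the individual $d_i^2$'s from the spectrum when some of them coincide or are rationally related, so that different tuples $\mathbf j$ produce the same value $\nu(\mathbf j)$ and multiplicities stack. The clean fix is to phrase this asymptotically — compare the counting functions $N(R)=\#\{\mathbf j:\nu(\mathbf j)\le R\}\cdot k^mV_\omega$ as $R\to\infty$ — so that $V_\omega$ falls out of the leading coefficient and the tuple $\{d_i^2\}$ is determined by the lower-order behavior (or directly by the volume of the simplex $\{\sum d_i^2 x_i\le 1,\ x_i\ge0\}$, which is $\big(m!\prod d_i^2\big)^{-1}$, together with $\prod d_i^2=\sqrt{\det\boldsymbol\omega}/\sqrt{\det\boldsymbol h}\cdot(\text{normalization})$ — in fact $\prod d_i^2=V_\omega/\sqrt{\det\boldsymbol h}$ is not needed; what is needed is just that the simplex volume plus $V_\omega$ plus the semigroup structure pin down $\{d_i^2\}$). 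I would present this step carefully but briefly, and leave the verification that $\det(\boldsymbol h^{-1}\boldsymbol\omega)$-data matches up as a routine remark.
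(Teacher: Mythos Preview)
Your overall structure is correct and matches the paper's: the cycle (ii)$\Rightarrow$(i)$\Rightarrow$(iii)$\Rightarrow$(ii), with the last two implications immediate from the spectrum Proposition. The only substantive step is (i)$\Rightarrow$(iii), and here your first instinct---read off the multiplicity of the lowest eigenvalue---is exactly what the paper does, and is cleaner than either of the alternatives you then pivot to. Since $\nu(\mathbf j)$ is uniquely minimized at $\mathbf j=\mathbf 0$, the lowest eigenvalue $\mu_1=\pi\sum d_i^2$ has multiplicity \emph{exactly} $V_\omega$, so $V_\omega$ is recovered at once. The paper then peels off the $d_i^2$ by a short induction: order $d_1^2\le\cdots\le d_m^2$; the gap $\mu_2-\mu_1$ between the first two \emph{distinct} eigenvalues is $2\pi d_1^2$; the multiplicity of $\mu_2$ (divided by $V_\omega$) reveals how many of the $d_j^2$ equal $d_1^2$, say $p$ of them; now all eigenvalues $\nu(\mathbf j)$ with $j_{p+1}=\cdots=j_m=0$ are known along with their multiplicities, so strip them out and the new minimum yields $d_{p+1}^2$; continue.

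Your asymptotic alternative would need more than you indicate: the leading coefficient of the counting function $N(R)$ is proportional to $V_\omega/\prod_i d_i^2$, not to $V_\omega$ alone, so the leading term does not separate $V_\omega$ from the $d_i$'s; you would need lower-order terms or an independent observation (such as the lowest-eigenvalue multiplicity you already mentioned) to finish. Your semigroup/gap-structure sketch is in spirit the same as the paper's argument, but the paper's explicit inductive recipe handles coincidences among the $d_i^2$ without any appeal to asymptotics or vague ``gap structure'' language. I would recommend simply committing to the lowest-eigenvalue/multiplicity route and writing out the induction as above.
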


\begin{proof}
It is clear from Proposition~\ref{comp} that (i) and (ii) are equivalent and
that (iii) implies (i) and (ii). To see that (i) implies (iii), note that the
lowest eigenvalue occurring in $\Spec(\omega,h)$ is $\pi(d_1^2+\dots +d_m^2)$,
with multiplicity precisely $V_\omega$.  Thus $V_\omega$ is spectrally
determined. If we order the $d_j$ so that $d_1^2\leq d_2^2+\dots\leq d_m^2$,
then $2\pi d_1^2$ is the difference between the first two distinct eigenvalues
$\mu_1$ and $\mu_2$.   From the
multiplicity of $\mu_2$, we can determine how many of the $d_j^2$ equal $d_1^2$;
denote this number by $p$.  Since we know $\sum_{j=1}^m\,d_j^2$ from $\mu_1$ and
we know $d_1^2$, we can determine all eigenvalues $\nu(\j)$ for which
$j_{p+1}=\dots =j_m=0$, along with their multiplicities. Removing all these from
the spectrum, the lowest remaining eigenvalue is $\nu(\j)$ where $j_{p+1}=1$ and
all other $j_l$'s are zero. This enables us to determine $d_{p+1}^2$ and its
multiplicity, and we continue inductively.
\end{proof}

\begin{remark}\label{dets}
If the symplectic volumes $V_{\omega}$ and $V_{\omega'}$ coincide, then the
first part of condition (iii) in the previous theorem
can be replaced by the condition that $\boldsymbol{h}$ and $\boldsymbol{h'}$
have the same determinant, or equivalently that
$\vol(M,h)=\vol(M,h')$, since the determinant is multiplicative and
$V_\omega=\sqrt{\det\boldsymbol{\omega}}$.
\end{remark}

\begin{cor}\label{contin}
Let $\omega$ be a translation-invariant symplectic
structure on $M=\ztm\bs\rtm$ that represents an integer cohomology class. Given
any translation-invariant metric $h$ on $M$, let $\{h_t\}_t$ be a family of
metrics constructed as in the proof of Corollary~\ref{corclass}.  Then
$(M,h_t,\omega)$ is quantum equivalent as well as classically equivalent to
$(M,h,\omega)$ for all $t$.
\end{cor}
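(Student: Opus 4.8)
The plan is to combine Theorem~\ref{main} with Corollary~\ref{corclass}. The metrics $h_t$ are by construction of the form $A_t^*h$, where $A_t$ is a curve in the group of linear symplectomorphisms of $(\rtm,\omega)$; by Remark~\ref{r}(i), $(M,h_t,\omega)$ is classically equivalent to $(M,h,\omega)$, which is half of what we need. For quantum equivalence it suffices to verify condition~(iii) of Theorem~\ref{main} with $\omega'=\omega$ and $h'=h_t$, namely that $\boldsymbol{h_t}^{-1}\boldsymbol\omega$ and $\boldsymbol h^{-1}\boldsymbol\omega$ have the same eigenvalue spectrum and that $V_\omega=V_\omega$ (the latter being trivially true, since $\omega$ is unchanged along the deformation).

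First I would record, writing $B$ for the matrix of $A_t$, that $\boldsymbol{h_t}=\,{}^tB\,\boldsymbol h\,B$, while the condition that $A_t$ preserves $\omega$ reads ${}^tB\,\boldsymbol\omega\,B=\boldsymbol\omega$, i.e.\ $\boldsymbol\omega\,B=\,{}^tB^{-1}\boldsymbol\omega$. Then
\begin{align*}
\boldsymbol{h_t}^{-1}\boldsymbol\omega
&=B^{-1}\,\boldsymbol h^{-1}\,{}^tB^{-1}\,\boldsymbol\omega
=B^{-1}\,\boldsymbol h^{-1}\,\boldsymbol\omega\,B,
\end{align*}
so $\boldsymbol{h_t}^{-1}\boldsymbol\omega$ is conjugate to $\boldsymbol h^{-1}\boldsymbol\omega$ and in particular has the same eigenvalues (with multiplicity). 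In the invariant language of Notation~\ref{not}, the point is simply that $F_t=h_t^\sharp\circ\omega^\flat=A_t^{-1}\circ F\circ A_t$ because $A_t^*\omega=\omega$, so the eigenvalues $\pm d_j^2 i$ are unchanged. Hence condition~(iii) holds, and Theorem~\ref{main} gives $\Spec(k\omega,h_t)=\Spec(k\omega,h)$ for all $k\in\Z^+$; together with the already-noted classical equivalence this is the assertion of the corollary.

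I do not anticipate a genuine obstacle here: the statement is essentially a bookkeeping consequence of the two cited results, the only substantive point being the observation that pulling back the metric by a symplectomorphism of $\omega$ conjugates the operator $F$ and therefore fixes its spectrum and the symplectic volume. The one thing worth stating carefully is that the matrix identities above are literally the translations of ``$A_t$ preserves $\omega$'' and ``$h_t=A_t^*h$,'' so that invoking Theorem~\ref{main}(iii) is justified; everything else is immediate.
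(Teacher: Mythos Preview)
Your proof is correct and follows essentially the same approach as the paper: the paper likewise cites Corollary~\ref{corclass} for classical equivalence and then verifies condition~(iii) of Theorem~\ref{main} by observing that $\boldsymbol{h}_t^{-1}\boldsymbol{\omega}=\boldsymbol{A}_t^{-1}\boldsymbol{h}^{-1}\boldsymbol{\omega}\,\boldsymbol{A}_t$. Your version simply spells out the intermediate matrix identities in more detail.
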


\begin{proof}
Classical equivalence was shown in Corollary~\ref{corclass}.
Quantum equivalence follows from Theorem~\ref{main}; in fact,
if $A_t$ (and hence $A_t^{-1}$) preserves~$\omega$ and
$h_t=A_t^*h$ then we have $$\boldsymbol{h}_t^{-1}\boldsymbol{\omega}=
\boldsymbol{A}_t^{-1}\boldsymbol{h}^{-1}\boldsymbol{\omega}
\boldsymbol{A}_t.$$
\end{proof}

\begin{defn}\label{d} We will say that $(M,h,\omega)$ is \emph{K\"ahler}, or
that the pair $(h,\omega)$ is a \emph{K\"ahler structure} on $M$, if there
exists a complex structure $J$ on $M$ such that $(M,h,J)$ is a K\"ahler manifold
whose associated K\"ahler form is $\omega$.
\end{defn}

\begin{prop}\label{ka}
The tuple $(M, h, \omega)$ is K\"ahler if and only if all the eigenvalues of
$h^\sharp\circ \omega^\flat$ (equivalently, of the matrix
$\boldsymbol{h}^{-1}\boldsymbol{\omega}$) are~$\pm i$.
\end{prop}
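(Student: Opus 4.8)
The plan is to show both implications directly, using the characterization of K\"ahler structures among compatible triples of linear data.

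First, suppose $(M,h,\omega)$ is K\"ahler, so there is a translation-invariant complex structure $J$ on $\rtm$ with $\omega(u,v)=h(Ju,v)$ and $h(Ju,Jv)=h(u,v)$. Then by the defining relation in Notation~\ref{not}(i), we have $h(Fu,v)=\omega(u,v)=h(Ju,v)$ for all $u,v$, hence $F=J$. Since $J^2=-\Id$, all eigenvalues of $F$ are $\pm i$; and recalling that these eigenvalues were written $\pm d_1^2i,\dots,\pm d_m^2i$, this says precisely that all $d_i^2=1$.

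Conversely, suppose all eigenvalues of $F=\boldsymbol{h}^{-1}\boldsymbol\omega$ are $\pm i$. Recall from Notation~\ref{not}(i) that $F$ is antisymmetric with respect to $h$. An $h$-antisymmetric operator whose eigenvalues are all $\pm i$ satisfies $F^2=-\Id$: indeed, $F$ is $h$-normal, hence $h$-orthogonally block-diagonalizable into $2\times 2$ rotation blocks $\begin{bmatrix}0&-\lambda\\\lambda&0\end{bmatrix}$ with $\lambda=\pm1$ by hypothesis, and each such block squares to $-\Id$. So $J:=F$ is an $h$-compatible almost complex structure ($J^2=-\Id$ and $h(Ju,Jv)=h(u,v)$ follows from $h$-antisymmetry of $J$ together with $J^2=-\Id$), and $\omega(u,v)=h(Fu,v)=h(Ju,v)$ is the associated fundamental $2$-form. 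Being translation-invariant, $J$ is integrable on the torus and $\omega$ is closed (it already is, being symplectic), so $(M,h,\omega,J)$ is K\"ahler.

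The only point requiring care — and the main (mild) obstacle — is the step $F^2=-\Id$ in the converse: one must note that $h$-antisymmetry makes $F$ diagonalizable over $\C$ with the $2\times 2$ real blocks described above, so that the eigenvalue hypothesis forces each block to be a genuine $90^\circ$ rotation rather than merely an operator with those eigenvalues up to conjugacy. Once $J:=F$ is seen to be an orthogonal complex structure, integrability is automatic for translation-invariant $J$ on a torus (the Nijenhuis tensor of a constant-coefficient $J$ vanishes), and nothing further is needed.
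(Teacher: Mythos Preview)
Your proof is correct and follows the same line as the paper's: both hinge on the equivalence, for the $h$-antisymmetric operator $F=h^\sharp\circ\omega^\flat$, between the eigenvalue condition and $F^2=-\Id$, and then identify the K\"ahler complex structure with~$F$. The paper compresses this into two sentences, while you spell out the block-diagonalization and the integrability of a constant~$J$ explicitly; no essential difference.
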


\begin{proof}
The latter condition is equivalent to $F^2=-\Id$ for the $h$-antisymmetric map
$F=h^\sharp\circ \omega^\flat$ from Notation~\ref{not}(i). But this is
equivalent to $(M, h, \omega)$ being
K\"ahler (with complex structure~$F$).
\end{proof}

\begin{cor}\label{cor.ka}
$\Spec(\omega, h)$ determines whether $(M,h,\omega)$ is K\"ahler. Moreover, any
two K\"ahler structures $(h,\omega)$ and $(h',\omega')$ that have the same
volume are quantum equivalent.
\end{cor}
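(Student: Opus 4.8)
The plan is to derive Corollary~\ref{cor.ka} directly from Proposition~\ref{ka} and Theorem~\ref{main}, since both ingredients are already in place. For the first assertion, I would argue as follows: by Proposition~\ref{ka}, the tuple $(M,h,\omega)$ is K\"ahler precisely when every eigenvalue of $h^\sharp\circ\omega^\flat$ equals $\pm i$, i.e.\ when $d_1^2=\dots=d_m^2=1$ in the notation of Notation~\ref{not}(i). But by the explicit description of $\Spec(\omega,h)$ in the Proposition preceding Theorem~\ref{main} (the collection of $\nu(\j)=\pi\sum_i d_i^2(2j_i+1)$ with multiplicities), the multiset $\{d_1^2,\dots,d_m^2\}$ is recovered from the spectrum by exactly the inductive procedure used in the proof of Theorem~\ref{main} (lowest eigenvalue gives $\sum d_i^2$, first spectral gap gives $2\pi d_1^2$, etc.). Hence the spectrum determines whether all $d_i^2$ equal $1$, which is the K\"ahler condition. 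Equivalently and more slickly: $(M,h,\omega)$ is K\"ahler iff the lowest eigenvalue of $\Spec(\omega,h)$ equals $\pi m$ (one checks $\sum d_i^2\geq m$ always fails unless... — actually this needs care), so I would instead just invoke the eigenvalue-recovery step of Theorem~\ref{main} rather than hunt for a one-line spectral characterization.

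For the second assertion, suppose $(h,\omega)$ and $(h',\omega')$ are both K\"ahler structures on $M$ with $V_\omega=V_{\omega'}$. By Proposition~\ref{ka}, the eigenvalues of $\boldsymbol h^{-1}\boldsymbol\omega$ are all $\pm i$, and likewise for $\boldsymbol{h'}^{-1}\boldsymbol{\omega'}$; since each matrix is $2m\times 2m$ with purely imaginary spectrum symmetric about $0$, both have eigenvalue spectrum $\{+i \text{ ($m$ times)}, -i \text{ ($m$ times)}\}$. Thus condition (iii) of Theorem~\ref{main} is satisfied (the eigenvalue spectra agree, and $V_\omega=V_{\omega'}$ by hypothesis), so by Theorem~\ref{main} we get $\Spec(k\omega,h)=\Spec(k\omega',h')$ for all $k\in\Z^+$, i.e.\ $(M,h,\omega)$ and $(M,h',\omega')$ are quantum equivalent. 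Note that here both $\omega$ and $h$ are allowed to vary; the point is simply that the K\"ahler condition pins down the eigenvalue data completely, leaving only the symplectic volume as a free invariant.

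I do not anticipate a genuine obstacle: the corollary is essentially a bookkeeping consequence of the two preceding results. The one place requiring a little attention is the first assertion, where I must make sure the argument of Theorem~\ref{main} showing ``(i) implies (iii)'' actually recovers the individual values $d_i^2$ (not merely some weaker invariant) so that I can test the condition ``$d_i^2=1$ for all $i$''; re-reading that proof, the inductive scheme does produce each $d_j^2$ with its multiplicity, so this is fine. Alternatively, one can observe that, given $V_\omega$ is spectrally determined (it is the multiplicity of the lowest eigenvalue), the tuple is K\"ahler iff the lowest eigenvalue $\pi\sum_i d_i^2$ is as small as possible given that constraint — but spelling this out is more trouble than just citing the recovery step, so I would go with the direct route.
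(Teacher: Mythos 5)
Your proof is correct and follows exactly the route the paper intends: the paper states this corollary without a separate proof, as an immediate consequence of Proposition~\ref{ka} (K\"ahler $\Leftrightarrow$ all eigenvalues of $\boldsymbol{h}^{-1}\boldsymbol{\omega}$ are $\pm i$) combined with the eigenvalue-recovery argument in Theorem~\ref{main} for the first assertion and with condition (iii) of Theorem~\ref{main} for the second. Your decision to abandon the ``lowest eigenvalue equals $\pi m$'' shortcut in favor of citing the recovery step was the right call, and your handling of the volume hypothesis matches the paper's remark that symplectic and Riemannian volume coincide in the K\"ahler case.
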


Note that in the K\"ahler case, the symplectic volume~$V_\omega$
equals the Riemannian volume of $(M,h)$; recall Remark~\ref{dets}
together with Proposition~\ref{ka}.

For the construction of examples, we will restrict attention to metrics of the
form
$$
h_{\a,\b} =\sum_{j=1}^m\,(a_j^2\,dx_j^2 + b_j^2\,dy_j^2),
$$
when $\omega=\omega_\rb=\sum_{j=1}^m \,r_j\,dx_j\wedge dy_j$, where
$\a=(a_1,\dotsc,a_m), \b=(b_1,\dotsc,b_m)\in\R^m$.

\begin{remark}\label{rem.eig}
The eigenvalues of $\boldsymbol{h}_{\a,\b}^{-1}\,\boldsymbol{\omega}_\rb$ are
given by $\pm i\frac{r_1}{a_1b_1},\dots, \pm i\frac{r_m}{a_mb_m}$.
The symplectic volume
$V_{\omega_\rb}$ equals $r_1r_2\dots r_m$, by Notation $\ref{not}$(ii).
\end{remark}

\begin{examples}\label{ex.ka}
$\text{ }$

\noindent (i) Let $m=2$. Set $h=dx_1^2+dy_1^2+dx_2^2+4\,dy_2^2$, so the
representing matrix is
$$\boldsymbol{h}=\begin{bmatrix}1&0&0&0\\0&1&0&0\\0&0&1&0\\0&0&0
&4\end{bmatrix},$$
and let
$$\omega=2\,dx_1\wedge dy_1 + 2\,dx_2\wedge dy_2\,\,\,\,\text{  and
}\,\,\,\,\omega'=dx_1\wedge dy_1 +4\,dx_2\wedge dy_2.$$

Then both $\boldsymbol{h}^{-1}\boldsymbol{\omega}$ and $\boldsymbol{h}^{-1}
\boldsymbol{\omega'}$ have
eigenvalues $\pm i$ and $\pm2i$.  Thus $\omega$ and $\omega'$ are quantum
equivalent magnetic fields on $(\Z^4\bs\R^4,h)$.  The two structures
$(\Z^4\bs\R^4, h, \omega)$ and $(\Z^4\bs\R^4,h,\omega')$ are not K\"ahler.

\noindent (ii)  Set $h=dx_1^2+4\,dy_1^2+dx_2^2+4\,dy_2^2$, so the representing
matrix is
$$\boldsymbol{h}=\begin{bmatrix}1&0&0&0\\0&4&0&0\\0&0&1&0\\0&0&0&4
\end{bmatrix},$$
$$\omega=2\,dx_1\wedge dy_1 + 2\,dx_2\wedge dy_2,$$
$h'=dx_1^2+dy_1^2+4\,dx_2^2+4\,dy_2^2$, so the representing matrix is
$$\boldsymbol{h'}=
\begin{bmatrix}1&0&0&0\\0&1&0&0\\0&0&4&0\\0&0&0&4\end{bmatrix},$$
and
$$\omega'=dx_1\wedge dy_1 +4\,dx_2\wedge dy_2.
$$
Then all eigenvalues of $\boldsymbol{h}^{-1}\boldsymbol{\omega}$ and of
$\boldsymbol{h'}^{-1}\boldsymbol{\omega'}$ are $\pm i$.
Thus $(\Z^4\bs\R^4,h,\omega)$ and
$(\Z^4\bs\R^4,h',\omega')$ are quantum equivalent K\"ahler structures.
Note that $h$ and $h'$ are isometric via the map that interchanges the
coordinates $y_1$ and $x_2$; so $\omega$ and the corresponding pullback of
$\omega'$ can be viewed as quantum equivalent K\"ahler structures on the same
underlying Riemannian manifold.
\end{examples}

\begin{remark} The first of the two examples above first appeared in a slightly
different context in \cite{GGKW}.
\end{remark}

In examples of pairs of quantum equivalent line bundles
arising from Theorem~\ref{main}, the cotangent bundles
endowed with the associated symplectic forms will in general
be nonsymplectomorphic. In particular, this is the case for
the pairs in Example~\ref{ex.ka}. In fact, we have:

\begin{prop}\label{noneq}
Let $\omega, \omega'$ be two translation-invariant symplectic
structures on the torus $M=\ztm\bs\rtm$ with Chern invariant factors
$\rb=(r_1,\ldots,r_m)$ and $\rb'=(r'_1,\ldots,r'_m)$, respectively.
Let $\Omega:=\pi^*\omega+\omega_0$
and $\Omega:=\pi^*\omega'+\omega_0$ be the associated symplectic
forms on $T^*M$, where $\omega_0$ is the Liouville form.
If $\rb\ne\rb'$, then $(T^*M,\Omega)$ and $(T^*M,\Omega')$
are not symplectomorphic.
\end{prop}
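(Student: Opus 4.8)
The plan is to distinguish $(T^*M,\Omega)$ from $(T^*M,\Omega')$ by a diffeomorphism-invariant of these symplectic manifolds, namely the image of the cohomology class $[\Omega]$ under the natural pairing with integral homology, interpreted as a collection of "integral periods." Since $T^*M$ deformation retracts onto the zero section $M$, the projection $\pi$ induces an isomorphism $\pi^*\colon H^2(M;\mathbb Z)\to H^2(T^*M;\mathbb Z)$, and the Liouville form $\omega_0=-d\lambda$ is exact, so $[\Omega]=[\pi^*\omega]=\pi^*[\omega]$ and likewise $[\Omega']=\pi^*[\omega']$. Thus a symplectomorphism $\Phi\colon(T^*M,\Omega)\to(T^*M,\Omega')$ would in particular give a diffeomorphism of $T^*M$ carrying $\pi^*[\omega']$ to $\pi^*[\omega]$, hence (via the isomorphism $\pi^*$ and the induced self-map on $H^2(M;\mathbb Z)$) a self-homeomorphism of $M$ pulling $[\omega']$ back to something with the same "divisor structure" as $[\omega]$. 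The point is that the Chern invariant factors $\rb$ of Proposition~\ref{comp} are exactly the elementary divisors of the alternating form $[\omega]$ on $H_1(M;\mathbb Z)=\ztm$, and these are invariant under the action of $GL(2m,\mathbb Z)$; so if $\Phi$ exists then $\rb=\rb'$.

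The steps, in order, are as follows. First, record that $T^*M\simeq M\times\rtm$ is homotopy equivalent to $M$, so $H^*(T^*M;\mathbb Z)\cong H^*(M;\mathbb Z)$ via $\pi^*$, and $[\Omega]=\pi^*[\omega]$, $[\Omega']=\pi^*[\omega']$ as above. Second, suppose $\Phi\colon(T^*M,\Omega)\to(T^*M,\Omega')$ is a symplectomorphism; then $\Phi^*[\Omega']=[\Omega]$ in $H^2(T^*M;\mathbb Z)$, so $\Phi$ induces an automorphism $\varphi$ of $H^2(M;\mathbb Z)$ (via $\pi^*$, using that $\Phi$ need not respect $\pi$ but does act on cohomology of the homotopy type) with $\varphi[\omega']=[\omega]$; dually, $\Phi_*$ gives an automorphism of $H_2(M;\mathbb Z)$, and hence — since $M$ is a torus, where cup product makes $H^2=\Lambda^2 H^1$ and $H_1$ is free of rank $2m$ — an automorphism $g\in GL(2m,\mathbb Z)$ of $H_1(M;\mathbb Z)=\ztm$ with $g^*[\omega']=[\omega]$ as alternating bilinear forms on $\ztm$. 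Third, invoke the structure theorem for alternating integral forms (the content of Proposition~\ref{comp}): the $GL(2m,\mathbb Z)$-orbit of a nondegenerate alternating form on $\ztm$ is determined by its elementary divisors $r_1\mid\cdots\mid r_m$, which for $[\omega]$ (resp. $[\omega']$) are precisely $\rb$ (resp. $\rb'$). Hence the existence of $g$ forces $\rb=\rb'$, contradicting the hypothesis $\rb\ne\rb'$.

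The one place that needs a little care — and the main obstacle — is Step 2: $\Phi$ is only a symplectomorphism of total spaces and carries no a priori compatibility with the bundle projections $\pi$, so one cannot directly push a basis of $H_1(M)$ around. The clean way around this is to work entirely in cohomology and homology of $T^*M$: $\Phi$ induces $\Phi^*$ on $H^2(T^*M;\mathbb Z)$, and under the canonical identification $H^2(T^*M;\mathbb Z)\cong H^2(M;\mathbb Z)=\Lambda^2(\ztm)^*$ (which is $\mathrm{Diff}(T^*M)$-natural because it comes from the homotopy type alone), $\Phi^*$ becomes \emph{some} automorphism of the abelian group $\Lambda^2(\ztm)^*$. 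What we actually need is that this automorphism lies in the image of $GL(2m,\mathbb Z)$ acting via $\Lambda^2$ — i.e., is induced by a lattice automorphism of $\ztm$. This is true: $\mathrm{Diff}(T^*M)\to \mathrm{Aut}(H_1(T^*M;\mathbb Z))=\mathrm{Aut}(\ztm)=GL(2m,\mathbb Z)$ is surjective (realize each lattice automorphism by a linear diffeomorphism of $\rtm/\ztm$, then cross with $\mathrm{id}_{\rtm}$), and any self-homotopy-equivalence of $T^*M\simeq M$ acts on $H^2$ as the $\Lambda^2$ of its action on $H_1^*$ because $M$ is a $K(\ztm,1)$; so $\Phi^*$ on $H^2$ is $\Lambda^2(g^*)$ for a well-defined $g\in GL(2m,\mathbb Z)$. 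Then $\Lambda^2(g^*)[\omega']=[\omega]$ says exactly $g^*[\omega']=[\omega]$ as alternating forms, and Step 3 closes the argument. (Alternatively, one may cite directly that homeomorphic symplectic tori with integral forms of differing elementary divisors cannot have their symplectic classes matched, since the elementary divisors are a homeomorphism invariant of the pair $(M,[\omega])$, as noted in the remark following Proposition~\ref{comp}; the argument above is just the explicit version of that remark transported up to $T^*M$.)
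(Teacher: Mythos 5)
Your proof is correct, and it reaches the conclusion by a related but genuinely different mechanism from the paper's. The paper also works with the integral periods of $\Omega$, but it sidesteps your main technical point (identifying exactly which automorphisms of $H^2(T^*M;\Z)\cong\Lambda^2(\ztm)^*$ a diffeomorphism of $T^*M$ can induce) by using \emph{all} the powers $\Omega^k$, $k=1,\dots,m$: since $T^*M\cong M\times\rtm$, every class in $H_{2k}(T^*M;\Z)$ is represented by a cycle in $M\times\{0\}$, the integral of $\Omega^k$ over such a cycle equals that of $\omega^k$ over the corresponding cycle in $M$, and (using $r_1\mid\dots\mid r_k$) the minimal nonzero absolute value of these integrals is $r_1\cdots r_k$ up to a universal constant. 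These numbers are manifestly invariant under any diffeomorphism pulling $\Omega'$ back to $\Omega$, so $r_1\cdots r_k=r'_1\cdots r'_k$ for all $k$, whence $\rb=\rb'$ --- no input about the mapping class group of the torus and no appeal to the full elementary-divisor classification is needed. Your argument instead uses only the single class $[\Omega]=\pi^*[\omega]$, but pays for it with two extra (correct) inputs: that every self-homotopy-equivalence of $T^*M\simeq K(\ztm,1)$ acts on $H^2$ as $\Lambda^2$ of an element of $GL(2m,\Z)$ acting on $H^1$, and that the $GL(2m,\Z)$-orbit of a nondegenerate integral alternating form is classified by its elementary divisors (the content of Proposition~\ref{comp}). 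Both routes are sound; the paper's is more elementary and, as it happens, adapts immediately to $T^*M$ with the zero section removed (Proposition~\ref{noneq0}), where one must check which homology classes survive and which integrals vanish --- your homotopy-theoretic reduction would also go through there, but only after noting that $H^2$ and the relevant $H_1$-action are unchanged in degrees $\le 2m-2$ while the new classes in degree $2m$ contribute nothing.
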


\begin{proof}
For $k=1,\ldots,m$, we consider the values of the integer cohomology classes
of $T^*M$ represented
by $\Omega^k:=\Omega\wedge\ldots\wedge\Omega$ on integer homology classes
of~$T^*M$.
We have $T^*M\cong M\times\rtm$. In particular, each integer homology
class in $H_{2k}(T^*M;\Z)$ can be represented by a suitable smooth closed cycle
in $M\times\{0\}$
(a finite sum of oriented $2k$-dimensional subtori).
We consider the integrals of $\Omega^k:=\Omega\wedge
\ldots\wedge\Omega$ over such $2k$-cycles. These are equal to the integrals
of $\omega^k$ over the corresponding cycles in~$M$.
Obviously, the minimal nonzero absolute value of these integrals is
$r_1\cdot\ldots\cdot r_k$. Thus, if there were a symplectomorphism
$(T^*M,\Omega)\to(T^*M,\Omega')$, then $r_1\cdot\ldots\cdot r_k=
r'_1\cdot\ldots\cdot r'_k$ for each $k=1,\ldots,m$, and thus $\rb=\rb'$.
\end{proof}

The next proposition shows that the previous result continues to hold if we
remove the zero
section from the cotangent bundle, as it might seem natural to do
in some contexts (see the comments after Theorem \ref{2}): Let $T^*M
\smallsetminus0\cong
M\times(\rtm \smallsetminus\{0\})$ denote the manifold of all nonvanishing
cotangent vectors to~$M$ (this is an open submanifold of $T^*M$).

\begin{prop}\label{noneq0}
In the situation of Proposition~\ref{noneq}, $\rb\ne\rb'$
also implies that $(T^*M \smallsetminus0,\Omega)$ and $(T^*M
\smallsetminus0,\Omega')$
are not symplectomorphic.
\end{prop}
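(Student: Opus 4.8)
The plan is to reduce Proposition~\ref{noneq0} to Proposition~\ref{noneq} by showing that a symplectomorphism of the punctured cotangent bundles would force the same equality of partial products $r_1\cdots r_k = r'_1\cdots r'_k$ for all $k$. The key observation is that, although we have excised the zero section, the $2k$-dimensional integer homology classes we used in the proof of Proposition~\ref{noneq} survive: the natural inclusion $T^*M\smallsetminus 0\hookrightarrow T^*M$ induces an isomorphism on $H_j(\,\cdot\,;\Z)$ for $j\le 2m-2$, since $T^*M\smallsetminus 0\cong M\times(\rtm\smallsetminus\{0\})$ and $\rtm\smallsetminus\{0\}$ is homotopy equivalent to $S^{2m-1}$, whose reduced homology is concentrated in degree $2m-1$. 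Thus for every $k$ with $2k\le 2m-2$, i.e.\ $k\le m-1$, the group $H_{2k}(T^*M\smallsetminus0;\Z)$ is naturally identified with $H_{2k}(T^*M;\Z)\cong H_{2k}(M;\Z)$, and the $2k$-cycles lying in $M\times\{q_0\}$ for a fixed nonzero covector $q_0$ represent the same classes as before. Since $\Omega^k$ and its pullback under a putative symplectomorphism are closed forms whose periods over these cycles are symplectic invariants, the argument of Proposition~\ref{noneq} goes through verbatim to give $r_1\cdots r_k = r'_1\cdots r'_k$ for $k=1,\dots,m-1$.

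The remaining case is $k=m$: here $2k = 2m$ exceeds the range $2m-2$, and $H_{2m}(T^*M\smallsetminus0;\Z)$ need no longer agree with $H_{2m}(T^*M;\Z)$. However, we do not actually need the top partial product: the equalities $r_1\cdots r_k = r'_1\cdots r'_k$ for $k=1,\dots,m-1$ already force $r_j = r'_j$ for $j=1,\dots,m-1$ (taking successive quotients, using that all $r_j,r'_j$ are positive integers), so only the pair $(r_m,r'_m)$ is left undetermined by this homological argument. To pin down $r_m$ we invoke the symplectic volume: a symplectomorphism $(T^*M\smallsetminus0,\Omega)\to(T^*M\smallsetminus0,\Omega')$ preserves the Liouville volume form $\Omega^{\wedge (2m)}/(2m)!$, and since $\Omega = \pi^*\omega + \omega_0$ with $\omega_0$ the standard Liouville form, on any compact region of the form $M\times B$ (with $B\subset\rtm$ a ball not containing the origin, which lies in the punctured bundle) the total $\Omega$-volume equals $\vol(B)\cdot V_\omega = \vol(B)\cdot r_1\cdots r_m$ — the $\pi^*\omega$ terms contribute the symplectic volume $V_\omega$ of the base factor. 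But a symplectomorphism need not preserve such a product region, so a cleaner route is: the volume form $\Omega^{\wedge 2m}$ is, after identifying $T^*M\smallsetminus 0\cong M\times(\rtm\smallsetminus\{0\})$, equal to $V_\omega$ times the product of the standard volume form on $M$ and the standard one on $\rtm$. Its integral over any given compact region is not invariant, but the De~Rham cohomology class of $\Omega^m$ paired against the fundamental class of $M\times\{q_0\}$ — which is a well-defined element of $H_{2m}(T^*M\smallsetminus0;\Z)$ lying in the image of $H_{2m}(M;\Z)$ — recovers $\int_M \omega^m = m!\,V_\omega = m!\,r_1\cdots r_m$, and this pairing \emph{is} a symplectomorphism invariant.

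Putting these together: a symplectomorphism of the punctured cotangent bundles yields $r_1\cdots r_k = r'_1\cdots r'_k$ for $k=1,\dots,m-1$ from the lower-degree homology, and $r_1\cdots r_m = r'_1\cdots r'_m$ from pairing $[\Omega^m]$ against the (fixed-fiber) fundamental cycle of $M$ in $H_{2m}(T^*M\smallsetminus0;\Z)$; hence $\rb = \rb'$, contradicting $\rb\ne\rb'$. The main obstacle is the $k=m$ step, because the homology of the punctured cotangent bundle in degree $2m$ genuinely differs from that of $T^*M$; the resolution is to notice that the class of $M\times\{q_0\}$ still makes sense in $H_{2m}(T^*M\smallsetminus0;\Z)$ (it is a smooth closed $2m$-cycle disjoint from the removed zero section) and that pairing the closed form $\Omega^m$ against it is manifestly preserved by symplectomorphisms — one does not need the inclusion-induced map on $H_{2m}$ to be an isomorphism, only that this particular cycle and this particular cohomology pairing are available. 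I would write the proof to emphasize that all the cycles and forms used in Proposition~\ref{noneq} already live naturally in $T^*M\smallsetminus0$, so that its proof applies with essentially no change once this point is made.
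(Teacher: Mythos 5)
Your treatment of the cases $k\le m-1$ is exactly the paper's: the inclusion $T^*M\smallsetminus0\hookrightarrow T^*M$ induces isomorphisms on $H_j(\cdot;\Z)$ for $j\le 2m-2$, so the period argument of Proposition~\ref{noneq} applies unchanged and determines $r_1,\dots,r_{m-1}$. The gap is in your $k=m$ step. You assert that pairing $[\Omega^m]$ against the specific class $[M\times\{q_0\}]$ ``is manifestly preserved by symplectomorphisms,'' needing only that ``this particular cycle and this particular cohomology pairing are available.'' That is not enough. A symplectomorphism $\Phi$ gives $\int_{c}\Omega^m=\int_{\Phi_*c}\Omega'^m$, so the period of $\Omega^m$ over a \emph{fixed} class $c$ is \emph{not} an invariant unless you control $\Phi_*c$; and $\Phi_*$ is merely some automorphism of $H_{2m}(T^*M\smallsetminus0;\Z)$, which in this degree is strictly larger than $H_{2m}(M;\Z)$ (this is the very point you acknowledge and then set aside). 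A priori $\Phi_*[M\times\{q_0\}]$ could be $a[M\times\{q_0\}]+(\text{other classes})$ with $a\ne\pm1$, and then your pairing computes $a\cdot m!\,V_{\omega'}$, not $m!\,V_{\omega'}$.

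What is genuinely invariant is the \emph{set} of periods of $\Omega^m$ over all of $H_{2m}(T^*M\smallsetminus0;\Z)$, hence its minimal nonzero absolute value. The paper therefore does exactly the homological computation you declared unnecessary: $H_{2m}(M\times(\rtm\smallsetminus\{0\});\Z)\cong\Z\oplus\Z^{2m}$, where the $\Z^{2m}$ summand is generated by products of $1$-cycles in $M$ with the generator of $H_{2m-1}(\rtm\smallsetminus\{0\};\Z)$, and $\Omega^m$ integrates to zero over these extra generators. Consequently the minimal nonzero absolute value of the periods of $\Omega^m$ is still $r_1\cdots r_m$, and this invariant finishes the proof. (Alternatively, your argument can be repaired by applying it to both $\Phi$ and $\Phi^{-1}$ to force the integer $a$ above to be $\pm1$ --- but some such additional step is required; the assertion of ``manifest'' invariance is where the proof as written breaks.) The Liouville-volume digression in the middle of your argument should simply be deleted: as you yourself note, neither the total volume (infinite) nor the volume of a non-invariant region gives a symplectomorphism invariant.
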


\begin{proof}
Let $X\in\rtm \smallsetminus\{0\}$ be arbitrary. For $j\le 2m-2$,
the $j$th homology group of $T^*M \smallsetminus0\cong M\times(\rtm
\smallsetminus\{0\})$
is still isomorphic to the $j$th homology group of $M$, and each of
its cycles can be represented by a suitable cycle in $M\times\{X\}$.
Therefore, by the same argument as in the proof of Proposition~\ref{noneq}
we see that the symplectomorphism class of~$\Omega$ determines
$r_1,\ldots,r_{m-1}$. In order to see that it also determines~$r_m$,
note that $H_{2m}(M\times(\rtm \smallsetminus\{0\});\Z)=\Z\oplus\Z^{2m}$,
where $\Z$ corresponds to $H_{2m}(M;\Z)$ and
$\Z^{2m}$ is generated by products of $1$-cycles in~$M$
with a $(2m-1)$-cycle in $\rtm \smallsetminus\{0\}$
generating $H_{2m-1}(\rtm \smallsetminus\{0\};\Z)$.
Since the integral of $\Omega^m$ over such products vanishes,
we still have that the minimal nonzero absolute value of the integrals
of~$\Omega^m$
over $2m$-cycles representing integral homology classes in $T^*M
\smallsetminus0$
is $r_1\cdot\ldots\cdot r_m$.
\end{proof}

\end{document}